\newtheorem{thm}{Theorem}[section]
\newtheorem{lemma}[thm]{Lemma}
\newtheorem{cor}[thm]{Corollary}
\newtheorem{remark}[thm]{Remark}
\numberwithin{equation}{section}
\def\cH{\mathcal{H}}
\def\bC{\mathbb{C}}
\def\bN{\mathbb{N}}
\def\bM{\mathbb{M}}
\def\bR{\mathbb{R}}
\def\Re{\mathrm{Re}\,}
\def\Im{\mathrm{Im}\,}
\def\eps{\varepsilon}
\begin{document}
\baselineskip=16pt
\allowdisplaybreaks

\ \vskip 1cm 
\centerline{\LARGE Operator $k$-tone functions and analytic functional calculus}
\bigskip
\bigskip
\centerline{\Large
Fumio Hiai\footnote{Supported in part by Grant-in-Aid for Scientific Research (C)21540208. \\
\quad\ \ {\it E-mail address:} hiai.fumio@gmail.com}}

\medskip
\begin{center}
$^1$\,Tohoku University (Emeritus), \\
Hakusan 3-8-16-303, Abiko 270-1154, Japan
\end{center}

\medskip
\begin{abstract}
Operator $k$-tone functions on an open interval of the real line, which are higher order extensions
of operator monotone and convex functions, are characterized via certain inequalities for the real
and imaginary parts of analytic functional calculus by those functions.

\bigskip\noindent
{\it 2010 Mathematics Subject Classification:}
Primary 47A56, 47A60, 47A63, 15A39

\bigskip\noindent
{\it Key words and phrases:}
operator monotone function, operator convex function, operator $k$-tone function,
analytic functional calculus.

\end{abstract}

\section{Introduction}

Theory of operator/matrix monotone and convex function initiated by L\"owner \cite{Lw} and Kraus
\cite{Kr} has been a significant topic both theoretically and in applications in Hilbert space
operator theory and matrix analysis. In \cite{FHR} we introduced the notion of operator/matrix
$k$-tone functions, which is a higher order extension of operator/matrix monotone and convex
functions, and obtained several characterizations and integral representations of those functions.
An operator $k$-tone function $f$ on an open interval $(a,b)$ is automatically analytic and is
characterized, among others, in terms of positivity of the $k$th derivative of functional calculus
for $f$ in such a way that
$$
D^kf(A;B):={d^k\over dt^k}\,f(A+tB)\Big|_{t=0}\ge0
$$
for bounded self-adjoint operators $A,B$ such that the spectrum $\sigma(A)$ is in $(a,b)$ and $B\ge0$.
Moreover, this property enables us to compare $f(A+B)$ with its Taylor form as
\begin{equation}\label{F-1.1}
f(A+B)\ge\sum_{m=0}^{k-1}{1\over m!}\,D^mf(A;B)
\end{equation}
whenever the operator norm $\|B\|$ is small enough so that $\sigma(A+B)$ is in $(a,b)$.

In the present paper we characterize operator $k$-tone functions in terms of analytic functional
calculus of non-self-adjoint operators, instead of functional calculus of self-adjoint operators as
in \eqref{F-1.1}. We consider a continuous real function $f$ on $(a,b)$ which is analytically
continued to the upper half-plane $\Im z>0$. For an operator $X=A+iB$ with self-adjoint $A,B$, if
either $\sigma(A)\subset(a,b)$ or $B>0$ (i.e., $B$ is positive and invertible), then we can define
the analytic functional calculus $f(X)$. Our main theorem says that the condition of $f$ being
operator $k$-tone is characterized by inequalities between the real part $\Re f(X)$ and a certain
Taylor form of even powers, or between the imaginary part $\Im f(X)$ and a Taylor form of odd powers.
A remarkable point here is that our characterizing inequalities are of four different kinds with
inequality signs and even/odd powers, depending on $k$ (mod $4$). For instance, the operator
$4k$-tonicity of $f$ is characterized by the inequality
\begin{equation}\label{F-1.2}
\Re f(A+iB)\ge\sum_{m=0}^{2k-1}{(-1)^m\over(2m)!}D^{2m}f(A;B)
\end{equation}
for every self-adjoint operators $A,B$ with $\sigma(A)\subset(a,b)$.

The paper is organized as follows. Section 2 contains brief accounts on analytic functional calculus
and operator $k$-tone functions. In Section 3 we present the main theorem characterizing operator
$k$-tone functions in terms of analytic functional calculus as in \eqref{F-1.2}. Finally in Section 4
we restrict our considerations to operator monotone and convex functions, and obtain further
characterization results.

\section{Preliminaries}

\subsection{Notations}

In this paper $\cH$ is a separable infinite-dimensional Hilbert space and $B(\cH)$ is the set of all
bounded linear operators on $\cH$. We denote by $B(\cH)^{sa}$ the set of all self-adjoint
$A\in B(\cH)$ and $B(\cH)^+$ the set of all positive $A\in B(\cH)^{sa}$. In addition, $\bM_n$ is the
$n\times n$ complex matrix algebra (i.e., $\bM_n=B(\bC^n)$ on the $n$-dimensional Hilbert space
$\bC^n$), $\bM_n^{sa}$ the $n\times n$ Hermitian matrices, and $\bM_n^+$ the $n\times n$ positive
semidefinite matrices. For $A\in B(\cH)^{sa}$ (also $A\in\bM_n^{sa}$) we write $A>0$ when $A$ is
positive and invertible. For any open interval $(a,b)$ of $\bR$ we denote by $B(\cH)^{sa}(a,b)$ the
set of all $A\in B(\cH)^{sa}$ whose spectrum $\sigma(A)$ is in $(a,b)$, and similarly by
$\bM_n^{sa}(a,b)$ the set of all $A\in\bM_n^{sa}$ with eigenvalues in $(a,b)$.

\subsection{Analytic functional calculus}

Let $f$ be a continuous real function on an interval $(a,b)$, where $-\infty\le a<b\le\infty$, and
assume that $f$ is analytically continued to the upper half-plane $\bC^+$, that is, there is a
continuous function $\tilde f(z)$, $z\in\bC^+\cup(a,b)$, such that $\tilde f$ is analytic in $\bC^+$
and $\tilde f(x)=f(x)$ for all $x\in(a,b)$. For simplicity we denote the extension $\tilde f$ by the
same $f$. By reflection principle, $f$ is further extended to the lower half-plane $\bC^-$ in such a
way that $f(\bar z)=\overline{f(z)}$, $z\in\bC^-$, so that $f$ is analytic in
$(\bC\setminus\bR)\cup(a,b)$.

Let $X\in B(\cH)$ or, in particular, $X\in\bM_n$ with $\sigma(X)\subset(\bC\setminus\bR)\cup(a,b)$.
For the above $f$ one can define the analytic functional calculus of $X$ as
$$
f(X):={1\over2\pi i}\int_\Gamma f(\zeta)(\zeta I-X)^{-1}\,d\zeta,
$$
where $\Gamma$ is a piecewise smooth closed curve in $(\bC\setminus\bR)\cup(a,b)$
surrounding $\sigma(X)$. Furthermore, for any $Z\in B(\cH)$ and any $z\in\bC$ with
sufficiently small $|z|$ so that $\sigma(X+zZ)$ is inside $\Gamma$, the analytic functional
calculus $f(X+zZ)$ is also defined as
$$
f(X+zZ):={1\over2\pi i}\int_\Gamma f(\zeta)(\zeta I-X-zZ)^{-1}\,d\zeta,
$$
and the $B(\cH)$-valued analytic function $z\mapsto f(X+zZ)$ admits the Taylor expansion
\begin{equation}\label{F-2.1}
f(X+zZ)=\sum_{m=0}^\infty{z^m\over m!}D^mf(X;Z),
\end{equation}
where
\begin{align}\label{F-2.2}
D^mf(X;Z)&:={d^m\over dz^m}f(X+zZ)\Big|_{z=0} \nonumber\\
&\ ={m!\over2\pi i}\int_\Gamma f(\zeta)((\zeta I-X)^{-1}Z)^m(\zeta I-X)^{-1}\,d\zeta,
\qquad m\ge0.
\end{align}
Furthermore, when $A\in\bM_n^{sa}(a,b)$, for any $m\in\bN$ one has the $m$th {\it Fr\'echet
derivative} $D^mf(A)$, an $m$-multilinear map of $\bM_N^{sa}\times\cdots\times\bM_n^{sa}$ ($m$ times)
to $\bM_n^{sa}$, of the usual functional calculus $A\in\bM_n^{sa}(a,b)\mapsto f(A)\in\bM_n^{sa}$
(see, e.g., \cite[\S2.3]{Hi}). Then it is well-known that
$$
D^mf(A;B)=D^mf(A)(\underbrace{B,\dots,B}_{m}),\qquad B\in\bM_n^{sa}.
$$

\begin{lemma}\label{L-2.1}
Let $X\in B(\cH)$.
\begin{itemize}
\item[\rm(1)] If $\Im X>0$, then $\sigma(X)\subset\bC^+$,
\item[\rm(2)] If $\sigma(\Re X)\subset(a,b)$, then $\sigma(X)\subset(\bC\setminus\bR)\cup(a,b)$.
\end{itemize}
\end{lemma}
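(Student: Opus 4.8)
The plan is to localize $\sigma(X)$ by means of the numerical range $W(X):=\{\langle X\xi,\xi\rangle:\xi\in\cH,\ \|\xi\|=1\}$, invoking the classical fact that $\sigma(X)\subseteq\overline{W(X)}$ for every $X\in B(\cH)$. Write $X=A+iB$ with $A=\Re X$ and $B=\Im X$ self-adjoint; then for a unit vector $\xi$ one has $\langle X\xi,\xi\rangle=\langle A\xi,\xi\rangle+i\langle B\xi,\xi\rangle$, so $\Re W(X)$ is contained in the numerical range of $A$ and $\Im W(X)$ in that of $B$. The whole proof then reduces to reading off, in each case, which half-plane or vertical strip $\overline{W(X)}$ lies in.

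For part (1), the hypothesis $\Im X>0$ means $B\ge cI$ with $c:=\|B^{-1}\|^{-1}>0$, hence $\langle B\xi,\xi\rangle\ge c$ for every unit $\xi$. Thus $\overline{W(X)}$ is contained in the closed horizontal strip $\{z:c\le\Im z\le\|B\|\}\subset\bC^+$, and therefore $\sigma(X)\subseteq\overline{W(X)}\subset\bC^+$. For part (2), I would first note that $\sigma(A)$ is a compact subset of the open interval $(a,b)$, so there is $\delta>0$ with $\sigma(A)\subseteq[a+\delta,b-\delta]$; consequently $\langle A\xi,\xi\rangle\in[a+\delta,b-\delta]$ for every unit $\xi$, and $\overline{W(X)}$ lies in the closed vertical strip $\{z:a+\delta\le\Re z\le b-\delta\}$. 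Intersecting with the real axis gives $\overline{W(X)}\cap\bR\subseteq[a+\delta,b-\delta]\subset(a,b)$, whence $\sigma(X)\subseteq\overline{W(X)}\subseteq(\bC\setminus\bR)\cup(a,b)$.

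Since everything hinges on the inclusion $\sigma(X)\subseteq\overline{W(X)}$, one could instead give a self-contained argument by exhibiting the relevant points directly as resolvent points: if $\lambda=\alpha+i\beta$ with $\beta\le0$ (for (1)), then $\Im\langle(X-\lambda)\xi,\xi\rangle\ge c\|\xi\|^2$ and $\Im\langle(X-\lambda)^*\xi,\xi\rangle\le-c\|\xi\|^2$, so both $X-\lambda$ and $(X-\lambda)^*$ are bounded below and $X-\lambda$ is invertible; for (2) with $\lambda\le a$ one uses $A-\lambda\ge\delta I$ to factor $X-\lambda=(A-\lambda)^{1/2}\bigl(I+i(A-\lambda)^{-1/2}B(A-\lambda)^{-1/2}\bigr)(A-\lambda)^{1/2}$ into invertible factors (the middle factor being $I+iS$ with $S$ self-adjoint, which is bounded below together with its adjoint), and symmetrically for $\lambda\ge b$ using $\lambda-A\ge\delta I$.

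I do not expect a genuine obstacle here: both parts are soft spectral-localization statements. The only point requiring a little care is the compactness step producing the margin $\delta>0$, which is exactly what keeps the real points of $\sigma(X)$ strictly inside $(a,b)$ rather than merely in $[a,b]$.
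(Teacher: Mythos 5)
Your proof is correct, but your primary route differs from the paper's. You localize $\sigma(X)$ inside the closure of the numerical range $\overline{W(X)}$ and then observe that $\Im W(X)\ge c>0$ (for (1)) resp.\ $\Re W(X)\subseteq[a+\delta,b-\delta]$ (for (2)); the paper instead exhibits the relevant resolvent points directly, by writing down an explicit inverse $X^{-1}=B^{-1/2}(B^{-1/2}AB^{-1/2}+iI)^{-1}B^{-1/2}$ for (1) (citing Epstein) and the factorization $X-\lambda I=(A-\lambda I)^{1/2}\{I+i(A-\lambda I)^{-1/2}B(A-\lambda I)^{-1/2}\}(A-\lambda I)^{1/2}$ for (2). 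Your ``self-contained'' fallback paragraph is in fact essentially the paper's argument, so you have both proofs in hand. The numerical-range argument is softer and arguably more conceptual (one classical inclusion $\sigma(X)\subseteq\overline{W(X)}$ does all the work, and the Hausdorff--Toeplitz machinery is not even needed); the paper's factorization has the advantage of producing explicit resolvent formulas of the type that are reused later in the paper (e.g.\ in the proofs of Lemmas 3.3 and 3.4 and of Theorem 4.2), which is presumably why the author chose it. One stylistic remark: in (2) the margin $\delta>0$ from compactness of $\sigma(A)$ is not strictly necessary for your conclusion --- all you need is $W(A)\subseteq[\min\sigma(A),\max\sigma(A)]\subset(a,b)$, which already places every real point of $\overline{W(X)}$ inside $(a,b)$ --- but it does no harm.
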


\begin{proof}
(1) was given in \cite{Ep}. In fact, if $X=A+iB$ with $A,B\in B(\cH)^{sa}$ and $B>0$, then the
inverse of $X$ is
$$
X^{-1}=B^{-1/2}(B^{-1/2}AB^{-1/2}+iI)^{-1}B^{-1/2}.
$$
This implies that $X-zI$ is invertible for every $z\in\bC$ with $\Im z\le0$.

(2)\enspace
Assume that $X=A+iB$ where $A,B\in B(\cH)^{sa}$ with $aI<A<bI$. Then, for every $\lambda\in\bR$ with
$\lambda<a$, we have $A-\lambda I>0$ and
$$
X-\lambda I=(A-\lambda I)^{1/2}\{I+i(A-\lambda I)^{-1/2}B(A-\lambda I)^{-1/2}\}
(A-\lambda I)^{1/2}
$$
is invertible. Also, for every $\lambda\in\bR$ with $\lambda>b$, we have $\lambda I-A>0$ and
$$
\lambda I-A=(\lambda I-A)^{1/2}\{I-i(\lambda I-A)^{-1/2}B(\lambda I-A)^{-1/2}\}
(\lambda I-A)^{1/2}
$$
is invertible. Hence $\sigma(X)\subset(\bC\setminus\bR)\cup(a,b)$.
\end{proof}

Therefore, we can define the analytic functional calculus $f(X)$ in the cases (1) and (2) of the
above lemma.

\subsection{Operator $k$-tone functions}

A real function $f$ on $(a,b)$ is said to be {\it operator monotone} if $A\le B$ implies
$f(A)\le f(B)$ for every $A,B\in B(\cH)^{sa}(a,b)$, and {\it operator monotone decreasing} if $-f$
is operator monotone. Also, $f$ is said to be {\it operator convex} if 
$$
f(\lambda A+(1-\lambda)B)\le\lambda f(A)+(1-\lambda)f(B),\qquad0<\lambda<1,
$$
for all $A,B\in B(\cH)^{sa}(a,b)$, and {\it operator concave} if $-f$ is operator convex. The theory
of operator monotone and operator convex functions was initiated by L\"owner \cite{Lw} and
Kraus \cite{Kr}, which was further developed in \cite{HP} and others. For details, see, e.g.,
\cite[\S V.4]{Bh1} and also \cite{An1,Do,Hi}.

In \cite{FHR} we introduced the notion of {\it operator $k$-tone} functions for $k\in\bN$, extending
operator monotone functions (when $k=1$) and operator convex functions (when $k=2$). Since the
definition itself of operator $k$-tone functions in \cite[Definition 4.1]{FHR} is a bit long to state
in detail, we here recall its equivalent conditions for convenience. For a real continuous function
on $(a,b)$, among several equivalent conditions of $f$ being operator $k$-tone are the following:

\begin{itemize}
\item[(a)] $f$ is $C^k$ on $(a,b)$ and
$$
{d^k\over dt^k}\,f(A+tB)\Big|_{t=0}\ge0
$$
for every $A\in\bM_n^{sa}(a,b)$, $B\in\bM_n^+$, and for every $n\in\bN$.
\item[(b)] $f$ is analytic on $(a,b)$ and
$$
{d^k\over dt^k}\,f(A+tB)\Big|_{t=0}\ge0
$$
for every $A\in B(\cH)^{sa}(a,b)$ and every $B\in B(\cH)^+$, where the above derivative of order $k$
is well defined in the operator norm.
\item[(c)] $f$ is $C^k$ on $(a,b)$ and
$$
f(A+B)\ge\sum_{m=0}^{k-1}{1\over m!}D^mf(A)(\underbrace{B,\dots,B}_m)
$$
for every $A\in\bM_n^{sa}(a,b)$ and $B\in\bM_n^+$ such that $A+B\in\bM_n^{sa}(a,b)$, and for every
$n\in\bN$. 
\end{itemize}

The above conditions (a)--(c) equivalent to operator $k$-tonicity are found in
\cite[Proposition 2.6, Theorem 3.3]{FHR}. In the above conditions, if $k$ is even, then $X$ can be a
general matrix in $\bM_n^{sa}$ or operator in $B(\cH)^{sa}$.

Furthermore, it is seen from \cite[Corollary 3.4]{FHR} that an operator $k$-tone function on $(a,b)$
is analytically continued to $\bC^+\cup\bC^-$ so that the extended $f$ is analytic in
$(\bC\setminus\bR)\cup(a,b)$, as an operator monotone function on $(a,b)$ does so.

\section{Characterization of operator $k$-tone functions via analytic functional calculus}

The aim of this section is to prove the next theorem, which is our main result of the paper
characterizing operator $k$-tone functions on $(a,b)$ via analytic functional calculus. The
statements are divided into four cases depending on $k$ (mod~$4$).

\begin{thm}\label{T-3.1}
Let $f$ be a continuous real function on $(a,b)$, $-\infty\le a<b\le\infty$, analytically continued
to $\bC^+$. Then for each $k\in\bN$ the following assertions hold:
\begin{itemize}
\item[\rm(1)] $f$ is operator $(4k-2)$-tone if and only if
$$
\Re f(A+iB)\le\sum_{m=0}^{2k-2}{(-1)^m\over(2m)!}D^{2m}f(A;B)
$$
for every $A\in B(\cH)^{sa}(a,b)$ and every $B\in B(\cH)^{sa}$, or equivalently, for every
$A\in\bM_n^{sa}(a,b)$ and every $B\in\bM_n^+$ of any $n\in\bN$.
\item[\rm(2)] $f$ is operator $4k$-tone if and only if
$$
\Re f(A+iB)\ge\sum_{m=0}^{2k-1}{(-1)^m\over(2m)!}D^{2m}f(A;B)
$$
for every $A\in B(\cH)^{sa}(a,b)$ and every $B\in B(\cH)^{sa}$, or equivalently, for every
$A\in\bM_n^{sa}(a,b)$ and every $B\in\bM_n^+$ of any $n\in\bN$.
\item[\rm(3)] $f$ is operator $(4k-3)$-tone if and only if
$$
\Im f(A+iB)\ge\sum_{m=1}^{2k-2}
{(-1)^{m-1}\over(2m-1)!}D^{2m-1}f(A;B)
$$
for every $A\in B(\cH)^{sa}(a,b)$ and every $B\in B(\cH)^+$, or equivalently, for every
$A\in\bM_n^{sa}(a,b)$ and every $B\in\bM_n^+$ of any $n\in\bN$. (The above right-hand side
is $0$ if $k=1$.)
\item[\rm(4)] $f$ is operator $(4k-1)$-tone if and only if
$$
\Im f(A+iB)\le\sum_{m=1}^{2k-1}
{(-1)^{m-1}\over(2m-1)!}D^{2m-1}f(A;B)
$$
for every $A\in B(\cH)^{sa}(a,b)$ and every $B\in B(\cH)^+$, or equivalently, for every
$A\in\bM_n^{sa}(a,b)$ and every $B\in\bM_n^+$ of any $n\in\bN$.
\end{itemize}
\end{thm}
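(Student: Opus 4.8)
The plan is to fold the four assertions into a single statement about the operator-valued Taylor remainder. For $A\in B(\cH)^{sa}(a,b)$ and self-adjoint $B$, set
$r_k(z):=f(A+izB)-\sum_{m=0}^{k-1}\tfrac{(iz)^m}{m!}D^mf(A;B)$,
which, since $\Re(A+izB)=A$, is well defined for all real $z$ by Lemma \ref{L-2.1}(2); each $D^mf(A;B)$ is self-adjoint, because $z\mapsto f(A+zB)$ is self-adjoint-valued for real $z$ near $0$. A direct bookkeeping separating real and imaginary parts of the finite sum in $r_k(1)$ shows that, with $\eps_k:=(-1)^{\lfloor k/2\rfloor}$ (so $\eps_k=1$ for $k\equiv0,1$ and $\eps_k=-1$ for $k\equiv2,3\pmod4$) and $R_k:=\Re r_k(1)$ for even $k$, $R_k:=\Im r_k(1)$ for odd $k$, the stated right-hand side of each case equals $\Re f(A+iB)-R_k$ or $\Im f(A+iB)-R_k$ accordingly, so that each of (1)--(4) becomes the single claim: $f$ is operator $k$-tone if and only if $\eps_kR_k\ge0$ for every admissible pair $A,B$ --- any self-adjoint $B$ when $k$ is even, $B\ge0$ when $k$ is odd.

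For the ``if'' direction, rescale $B\mapsto\eps B$: as soon as $\eps>0$ is small enough that \eqref{F-2.1} converges, $r_k(1)=\sum_{m\ge k}\tfrac{(i\eps)^m}{m!}D^mf(A;B)$, and $R_k$ keeps only the even-index (resp.\ odd-index) terms of this, whose leading one is $\eps_k\tfrac{\eps^k}{k!}D^kf(A;B)$. Hence $\eps_kR_k=\tfrac{\eps^k}{k!}D^kf(A;B)+o(\eps^k)$, and dividing by $\eps^k$ and letting $\eps\downarrow0$ yields $D^kf(A;B)\ge0$. Since this holds for all $A\in\bM_n^{sa}(a,b)$, all $n$, and all admissible $B$, $f$ is operator $k$-tone by condition (a)/(b). (The hypothesis already presupposes $f$ analytic on a neighbourhood of $(a,b)$, so that the $D^mf(A;B)$ occurring in it are defined via \eqref{F-2.2}; nothing further is needed.)

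The substance is the ``only if'' direction, for which I would invoke the integral representation of operator $k$-tone functions from \cite{FHR}: $f=p+c(x-\gamma)^k+\int_{\bR\setminus(a,b)}r_\lambda(x)\,d\mu(\lambda)$, with $\gamma\in(a,b)$ fixed, $\deg p\le k-1$, $c\ge0$, $\mu\ge0$ supported on the part(s) of $\bR\setminus(a,b)$ dictated by the sign of $f^{(k)}$, and $r_\lambda(x)$ equal to $\tfrac1{\lambda-x}$ or $\tfrac1{x-\lambda}$ --- whichever is operator $k$-tone on $(a,b)$ --- minus its degree-$(k-1)$ Taylor polynomial at $\gamma$. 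Expressing the analytic functional calculus as a Cauchy integral and applying Fubini (legitimate since, by Lemma \ref{L-2.1}(2), $\Gamma$ can encircle $\sigma(A+iB)$ while avoiding the closed set $\mathrm{supp}\,\mu\subset\bR\setminus(a,b)$), one sees that $f(A+iB)$, each $D^mf(A;B)$, and hence $R_k$, pass under $\int d\mu$. As $R_k\equiv0$ for every polynomial of degree $\le k-1$, and $\eps_kR_k=cB^k\ge0$ for the term $c(x-\gamma)^k$, it remains to check $\eps_kR_k\ge0$ for each building block $r_\lambda$. This is a resolvent computation: putting $R:=(\lambda-A)^{-1}>0$ (or $(A-\lambda)^{-1}>0$) and $S:=R^{1/2}BR^{1/2}$, one has $r_\lambda(A+iB)=\pm R^{1/2}(I\mp iS)^{-1}R^{1/2}$ and, the subtracted degree-$(k-1)$ polynomial being irrelevant to $r_k(1)$, $r_k(1)[r_\lambda]$ is, up to sign, $R^{1/2}(\pm iS)^k(I\mp iS)^{-1}R^{1/2}$. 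Since $\Re[(I\mp iS)^{-1}]=(I+S^2)^{-1}$ and $S^k$ commutes with every function of $S$, taking the real part (for even $k$) or the imaginary part (for odd $k$) collapses this to $\eps_kR^{1/2}S^k(I+S^2)^{-1}R^{1/2}$, whence $\eps_kR_k[r_\lambda]=R^{1/2}S^k(I+S^2)^{-1}R^{1/2}\ge0$, which is automatic for even $k$ since then $S^k=(S^{k/2})^2\ge0$, and holds for odd $k$ because the hypothesis $B\ge0$ makes $S\ge0$. Integrating over $\lambda$ completes the argument.

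The main obstacle is this last step, on two counts. First, one must pin down the precise integral representation from \cite{FHR} --- in particular which side(s) of $\bR\setminus(a,b)$ support $\mu$, and the exact degree and sign constraints on the polynomial part --- since this is exactly the data responsible for the four-way split according to $k\bmod4$. Second, the short but sign-sensitive algebra establishing that the real (or imaginary) part of $\pm R^{1/2}(\pm iS)^k(I\mp iS)^{-1}R^{1/2}$ equals $\eps_k$ times a positive operator is where the parity of $k$ decides whether only even powers of $S$ survive (so that $B$ may be arbitrary self-adjoint) or an odd power of $S$ remains (forcing $B\ge0$) --- matching the hypotheses in the four cases. By comparison the Fubini/Cauchy-integral interchange and the elementary verifications for the polynomial pieces are routine.
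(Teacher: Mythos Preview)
Your proposal is correct and follows essentially the same route as the paper: the ``if'' direction via the Taylor expansion of $f(A+i\eps B)$ and extraction of the leading coefficient, and the ``only if'' direction via the integral representation from \cite{FHR} reduced to a resolvent computation. Your unified packaging through $\eps_k=(-1)^{\lfloor k/2\rfloor}$ and the single remainder $R_k$ is a clean presentational improvement over the paper's four parallel cases, and your geometric-series identity $r_k(1)[r_\lambda]=R^{1/2}(\pm iS)^k(I\mp iS)^{-1}R^{1/2}$ is exactly the content of the paper's Lemmas~\ref{L-3.3} and~\ref{L-3.4}.

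The one place where the paper is more explicit than your sketch is the integral representation itself: rather than working on $(a,b)$ with a measure on $\bR\setminus(a,b)$ (whose support may be unbounded and whose precise form you leave to be ``pinned down''), the paper first restricts to a finite subinterval containing $\sigma(A)$, affinely transforms to $(-1,1)$, and then invokes the representation $f(x)=P(x)+\int_{[-1,1]}x^l(1-\lambda x)^{-1}\,d\mu(\lambda)$ with $\mu$ a finite measure on the compact set $[-1,1]$; this makes the uniform bounds for Fubini and the termwise differentiation (your \eqref{F-8} analogue) entirely routine. Your separate $c(x-\gamma)^k$ term is absorbed there as the $\lambda=0$ atom. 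Either normalization works, but you should be aware that the form you wrote down --- with $\mu$ ``supported on the part(s) of $\bR\setminus(a,b)$ dictated by the sign of $f^{(k)}$'' --- is not quite how \cite{FHR} states it, and the choice of $\tfrac{1}{\lambda-x}$ versus $\tfrac{1}{x-\lambda}$ is determined by the \emph{parity} of $k$ and the side of $(a,b)$, not by any sign of $f^{(k)}$.
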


\begin{proof}
As mentioned at the beginning of Section 2.2, the assumption on $f$ says that $f$ becomes an analytic
function in $(\bC\setminus\bR)\cup(a,b)$. Hence by Lemma \ref{L-2.1} the analytic functional calculus
$f(A+iB)$ can be defined whenever $A\in B(\cH)^{sa}(a,b)$ and $B\in B(\cH)^{sa}$.

Let us first prove the ``if\," part. Let $n\in\bN$ be arbitrary, and let $A\in\bM_n^{sa}(a,b)$ and
$B\in\bM_n^+$. By the Taylor expansion \eqref{F-2.1} with \eqref{F-2.2}, for every $l\in\bN$ we have
$$
f(A+i\eps B)=\sum_{m=0}^l{(i\eps)^m\over m!}\,D^mf(A;B)+O(\eps^{l+1})
\quad\mbox{as $\eps\searrow0$}.
$$
Since $D^mf(A;B)\in\bM_n^{sa}$ for all $m\ge0$, this implies that, for every $l\in\bN$,
\begin{align}
\Re f(A+i\eps B)&=\sum_{m=0}^l{(-1)^m\eps^{2m}\over(2m)!}\,D^{2m}f(A;B)+O(\eps^{2l+2})
\quad\mbox{as $\eps\searrow0$}, \label{F-3.1}\\
\Im f(A+i\eps B)&=\sum_{m=1}^l{(-1)^{m-1}\eps^{2m-1}\over(2m-1)!}\,D^{2m-1}f(A;B)
+O(\eps^{2l+1})\quad\mbox{as $\eps\searrow0$}. \label{F-3.2}
\end{align}
Let $k\in\bN$. When $l=2k-1$, formula \eqref{F-3.1} is rewritten as
$$
\Re f(A+i\eps B)=\sum_{m=0}^{2k-2}{(-1)^m\eps^{2m}\over(2m)!}\,D^{2m}f(A;B)
-{\eps^{4k-2}\over(4k-2)!}\,D^{4k-2}f(A;B)+O(\eps^{4k})
$$
as $\eps\searrow0$. If the condition in (1) for matrices is satisfied, then this implies that
$$
D^{4k-2}f(A;B)\ge0
$$
for every $A\in\bM_n^{sa}(a,b)$ and $B\in\bM_n^+$ of any $n\in\bN$. Hence $f$ is operator
$(4k-2)$-tone by condition (a) of Section 2.2 or \cite[Theorem 3.3]{FHR}. When $l=2k$, formula
\eqref{F-3.1} is rewritten as
$$
\Re f(A+i\eps B)=\sum_{m=0}^{2k-1}{(-1)^m\eps^{2m}\over(2m)!}D^{2m}f(A;B)
+{\eps^{4k}\over(4k)!}D^{4k}f(A;B)+O(\eps^{4k+2})
$$
as $\eps\searrow0$. Hence, the condition in (2) for matrices implies that $f$ is operator $4k$-tone.
Similarly, from \eqref{F-3.2} when $l=2k-1$ (resp., when $l=2k$) we see that the condition in (3)
(resp., in (4)) for matrices implies that $f$ is operator $(4k-3)$-tone (resp., operator
$(4k-1)$-tone). So the ``if\," parts of (1)--(4) have been proved.

To prove the ``only if\," part, we need to give some lemmas.

\begin{lemma}\label{L-3.2}
Let $k\in\bN$ and let $P(x)$ be a polynomial with real coefficients and degree $\le k$. Then for every
$A,B\in B(\cH)^{sa}$,
\begin{align*}
\Re P(A+iB)&=\sum_{m=0}^{[k/2]}{(-1)^m\over(2m)!}\,D^{2m}P(A;B), \\
\Im P(A+iB)&=\sum_{m=1}^{[(k+1)/2]}{(-1)^{m-1}\over(2m-1)!}\,D^{2m-1}P(A;B).
\end{align*}
\end{lemma}

\begin{proof}
Since the assertion is obvious when $P(x)$ is a constant, we may assume that $P(x)=x^l$ where
$l\in\{1,\dots,k\}$. Then
$$
P(A+iB)=(A+iB)^l=\sum_{m=0}^li^mF_{l-m,m}(A,B),
$$
where $F_{l-m,m}(A,B)$ denotes the sum of all products of $l-m$ $A$'s and $m$ $B$'s. So we have
\begin{align*}
\Re\bigl\{(A+iB)^l\bigr\}&=\sum_{m=0}^{[l/2]}(-1)^mF_{l-2m,2m}(A,B), \\
\Im\bigl\{(A+iB)^l\bigr\}&=\sum_{m=1}^{[(l+1)/2]}(-1)^{m-1}F_{l-2m+1,2m-1}(A,B).
\end{align*}
On the other hand, it is obvious that
$$
D^mx^l(A;B)={d^m\over dt^m}(A+tB)^l\Big|_{t=0}
=\begin{cases}m!F_{l-m,m}(A,B) & \text{if $0\le m\le l$}, \\
0 & \text{if $m>l$}.
\end{cases}
$$
Hence we have the desired formulas for $\Re\bigl\{(A+iB)^l\bigr\}$ and $\Im\bigl\{(A+iB)^l\bigr\}$.
\end{proof}

\begin{lemma}\label{L-3.3}
Let $k\in\bN$ and $\lambda\in[-1,1]$. Let $A\in B(\cH)^{sa}(-1,1)$ and $B\in B(\cH)^{sa}$,
and set $X:=A+iB$. Then
\begin{align*}
\Re\bigl\{X^{2k}(I-\lambda X)^{-1}\bigr\}
&\le\sum_{m=0}^{k-1}{(-1)^m\over(2m)!}\,D^{2m}\biggl({x^{2k}\over1-\lambda x}\biggr)(A;B)
\quad\mbox{if $k$ is odd}, \\
\Re\bigl\{X^{2k}(I-\lambda X)^{-1}\bigr\}
&\ge\sum_{m=0}^{k-1}{(-1)^m\over(2m)!}\,D^{2m}\biggl({x^{2k}\over1-\lambda x}\biggr)(A;B)
\quad\mbox{if $k$ is even}.
\end{align*}
\end{lemma}

\begin{proof}
First, note that $x^{2k}/(1-\lambda x)$ has the analytic continuation $z^{2k}/(1-\lambda z)$ that is
analytic in $(\bC\setminus\bR)\cup(-1,1)$, so $X^{2k}(I-\lambda X)^{-1}$ is well defined. When
$\lambda=0$, the required inequalities hold with equality by Lemma \ref{L-3.2}. So assume that
$\lambda\ne0$; then we write
$$
{x^{2k}\over1-\lambda x}
={1\over\lambda^{2k}}\cdot{\{1-(1-\lambda x)\}^{2k}\over1-\lambda x}
={1\over\lambda^{2k}}\cdot{1\over1-\lambda x}+P_\lambda(x),
$$
where
$$
P_\lambda(x):={1\over\lambda^{2k}}\sum_{l=1}^{2k}{2k\choose l}(-1)^l(1-\lambda x)^{l-1}.
$$
Hence we have
$$
\Re\bigl\{X^{2k}(I-\lambda X)^{-1}\bigr\}
={1\over\lambda^{2k}}\,\Re\bigl\{(I-\lambda X)^{-1}\bigr\}+\Re P_\lambda(X),
$$
\begin{align*}
&\sum_{m=0}^{k-1}{(-1)^m\over(2m)!}\,
D^{2m}\biggl({x^{2k}\over1-\lambda x}\biggr)(A;B) \\
&\qquad={1\over\lambda^{2k}}\sum_{m=0}^{k-1}{(-1)^m\over(2m)!}\,
D^{2m}\biggl({1\over1-\lambda x}\biggr)(A;B)
+\sum_{m=0}^{k-1}{(-1)^m\over(2m)!}\,D^{2m}P_\lambda(A;B),
\end{align*}
and by Lemma \ref{L-3.2}
$$
\Re P_\lambda(X)=\sum_{m=0}^{k-1}{(-1)^m\over(2m)!}\,D^{2m}P_\lambda(A;B).
$$
Therefore, it suffices to show that
\begin{align*}
\Re\bigl\{(I-\lambda X)^{-1}\bigr\}
&\le\sum_{m=0}^{k-1}{(-1)^m\over(2m)!}\,D^{2m}\biggl({1\over1-\lambda x}\biggr)(A;B)
\quad\mbox{if $k$ is odd}, \\
\Re\bigl\{(I-\lambda X)^{-1}\bigr\}
&\ge\sum_{m=0}^{k-1}{(-1)^m\over(2m)!}\,D^{2m}\biggl({1\over1-\lambda x}\biggr)(A;B)
\quad\mbox{if $k$ is even}.
\end{align*}
Since $I-\lambda A>0$, we define an operator $C_\lambda\in B(\cH)^{sa}$ by
$$
C_\lambda:=\lambda(I-\lambda A)^{-1/2}B(I-\lambda A)^{-1/2}.
$$
We then have
\begin{align}
(I-\lambda X)^{-1}
&=\bigl\{(I-\lambda A)^{1/2}(I-iC_\lambda)(I-\lambda A)^{1/2}\bigr\}^{-1} \nonumber\\
&=(I-\lambda A)^{-1/2}\bigl(I+C_\lambda^2\bigr)^{-1}(I+iC_\lambda)(I-\lambda A)^{-1/2} \label{F-3.3}
\end{align}
so that
$$
\Re\{(I-\lambda X)^{-1}\}=(I-\lambda A)^{-1/2}\bigl(I+C_\lambda^2\bigr)^{-1}(I-\lambda A)^{-1/2}.
$$
On the other hand, for $t\in\bR$ sufficiently near $0$, we have
\begin{align*}
\{I-\lambda(A+tB)\}^{-1}
&=\bigl\{(I-\lambda A)^{1/2}(I-tC_\lambda)(I-\lambda A)^{1/2}\bigr\}^{-1} \\
&=\sum_{m=0}^\infty t^m(I-\lambda A)^{-1/2}C_\lambda^m(I-\lambda A)^{-1/2}
\end{align*}
so that
\begin{equation}\label{F-3.4}
D^m\biggl({1\over1-\lambda x}\biggr)(A;B)
=m!(I-\lambda A)^{-1/2}C_\lambda^m(I-\lambda A)^{-1/2},\qquad m\ge0.
\end{equation}
Therefore,
\begin{align*}
&\Re\bigl\{(I-\lambda X)^{-1}\bigr\}-\sum_{m=0}^{k-1}{(-1)^m\over(2m)!}\,
D^{2m}\biggl({1\over1-\lambda x}\biggr)(A;B) \\
&\quad=(I-\lambda A)^{-1/2}\bigl(I+C_\lambda^2\bigr)^{-1}(I-\lambda A)^{-1/2}
-\sum_{m=0}^{k-1}(-1)^m(I-\lambda A)^{-1/2}C_\lambda^{2m}(I-\lambda A)^{-1/2} \\
&\quad=(I-\lambda A)^{-1/2}
\Bigl\{\bigl(I+C_\lambda^2\bigr)^{-1}-\bigl(I+C_\lambda^2\bigr)^{-1}
\bigl(I-(-1)^kC_\lambda^{2k}\bigr)\Bigr\}(I-\lambda A)^{-1/2} \\
&\quad=(-1)^k(I-\lambda A)^{-1/2}\bigl(I+C_\lambda^2\bigr)^{-1}C_\lambda^{2k}(I-\lambda A)^{-1/2},
\end{align*}
which yields the desired conclusion.
\end{proof}

\begin{lemma}\label{L-3.4}
Let $k\in\bN$ and $\lambda\in[-1,1]$. Let $A\in B(\cH)^{sa}(-1,1)$ and $B\in B(\cH)^+$, and set
$X:=A+iB$. Then
\begin{align*}
\Im\bigl\{X^{2k-1}(I-\lambda X)^{-1}\bigr\}
&\ge\sum_{m=1}^{k-1}{(-1)^{m-1}\over(2m-1)!}\,D^{2m-1}\biggl({x^{2k-1}\over1-\lambda x}\biggr)(A;B)
\quad\mbox{if $k$ is odd}, \\
\Im\bigl\{X^{2k-1}(I-\lambda X)^{-1}\bigr\}
&\le\sum_{m=1}^{k-1}{(-1)^{m-1}\over(2m-1)!}\,D^{2m-1}\biggl({x^{2k-1}\over1-\lambda x}\biggr)(A;B)
\quad\mbox{if $k$ is even}.
\end{align*}
(The right-hand side of the first inequality is $0$ if $k=1$.)
\end{lemma}

\begin{proof}
By Lemma \ref{L-3.2} we may assume that $\lambda\ne0$. Since
$$
{x^{2k-1}\over1-\lambda x}={1\over\lambda^{2k-1}}\cdot{1\over1-\lambda x}
+\tilde P_\lambda(x)
$$
with a polynomial $\tilde P(x)$ of degree $2k-2$, it suffices as in the proof of Lemma \ref{L-3.3}
to prove that
\begin{align*}
\lambda\,\Im\bigl\{(I-\lambda X)^{-1}\bigr\}&\ge\lambda
\sum_{m=1}^{k-1}{(-1)^{m-1}\over(2m-1)!}\,D^{2m-1}\biggl({1\over1-\lambda x}\biggr)(A;B)
\quad\mbox{if $k$ is odd}, \\
\lambda\,\Im\{(I-\lambda X)^{-1}\}&\le\lambda
\sum_{m=1}^{k-1}{(-1)^{m-1}\over(2m-1)!}\,D^{2m-1}\biggl({1\over1-\lambda x}\biggr)(A;B)
\quad\mbox{if $k$ is even}.
\end{align*}
(Here, the inequalities with the factor $1/\lambda^{2k-1}$ are equivalent to those with $\lambda$.)
With the same $C_\lambda$ as in the proof of Lemma \ref{L-3.3}, it follows from \eqref{F-3.3} that
$$
\Im\bigl\{(I-\lambda X)^{-1}\bigr\}
=(I-\lambda A)^{-1/2}\bigl(I+C_\lambda^2\bigr)^{-1}C_\lambda(I-\lambda A)^{-1/2}.
$$
From \eqref{F-3.4} we obtain
\begin{align*}
&\lambda\,\Im\bigl\{(I-\lambda X)^{-1}\bigr\}
-\lambda\sum_{m=1}^{k-1}{(-1)^{m-1}\over(2m-1)!}\,
D^{2m-1}\biggl({1\over1-\lambda x}\biggr)(A;B) \\
&\quad=\lambda(I-\lambda A)^{-1/2}\bigl(I+C_\lambda^2\bigr)^{-1}
C_\lambda(I-\lambda A)^{-1/2} \\
&\quad\qquad-\lambda\sum_{m=1}^{k-1}(-1)^{m-1}(I-\lambda A)^{-1/2}C_\lambda^{2m-1}
(I-\lambda A)^{-1/2} \\
&\quad=\lambda(I-\lambda A)^{-1/2}\Bigl\{\bigl(I+C_\lambda^2\bigr)^{-1}C_\lambda
-\bigl(I+C_\lambda^2\bigr)^{-1}\bigl(C_\lambda-(-1)^{k-1}C_\lambda^{2k-1}\bigr)\Bigr\}
(I-\lambda A)^{-1/2} \\
&\quad=(-1)^{k-1}(I-\lambda A)^{-1/2}\bigl(I+C_\lambda^2\bigr)^{-1}
\bigl(\lambda C_\lambda^{2k-1}\bigr)(I-\lambda A)^{-1/2}.
\end{align*}
Thanks to $B\in B(\cH)^+$ we have
$$
\lambda C_\lambda^{2k-1}
=\lambda^{2k}\bigl\{(I-\lambda A)^{-1/2}B(I-\lambda A)^{-1/2}\bigr\}^{2k-1}\ge0,
$$
and hence the desired conclusion follows.
\end{proof}

Now let us prove the ``only if\," part of Theorem \ref{T-3.1}. Assume that $f$ is operator $l$-tone
on $(a,b)$ where $l\in\bN$, and let $A\in B(\cH)^{sa}(a,b)$ and $B\in B(\cH)^{sa}$. By taking a
finite open interval $(a',b')\subset(a,b)$ such that $\sigma(A)\subset(a',b')$, we may assume that
$(a,b)$ itself is a finite interval. We have a linear transformation $\alpha x+\beta$ with $\alpha>0$
and $\beta\in\bR$ which transforms $(a,b)$ to $(-1,1)$. By replacing $f$ and $A$, $B$ with
$\tilde f(x):=f((x-\beta)/\alpha)$ and $\tilde A:=\alpha A+\beta I$, $\tilde B:=\alpha B$,
respectively, we have
$$
f(A+iB)=\tilde f(\tilde A+i\tilde B),\quad
D^mf(A;B)=D^m\tilde f(\tilde A;\tilde B),\quad m\ge0,
$$
so we end up by assuming that $(a,b)=(-1,1)$. Then by \cite[Theorem 4.1]{FHR}, $f$ admits the
integral expression
$$
f(x)=P(x)+\int_{[-1,1]}{x^l\over1-\lambda x}\,d\mu(\lambda),\qquad x\in(-1,1),
$$
where $P(x)$ is a polynomial (with real coefficients) of degree $<l$ and $\mu$ is a finite positive
measure on $[-1,1]$. For $\lambda\in[-1,1]$ and $x\in(-1,1)$ we set
$$
g_\lambda(x):={x^l\over1-\lambda x}\quad\mbox{and}\quad
g(x):=\int_{[-1,1]}{x^l\over1-\lambda x}\,d\mu(\lambda)
=\int_{[-1,1]}g_\lambda(x)\,d\mu(\lambda).
$$
It is immediate to see that if $H\in B(\cH)^{sa}(-1,1)$, then $g_\lambda(H)=H^l(I-\lambda H)^{-1}$
is continuous in $\lambda\in[-1,1]$ in the operator norm and
\begin{equation}\label{F-7}
g(H)=\int_{[-1,1]}g_\lambda(H)\,d\mu(\lambda),
\end{equation}
where the integral may be considered in the weak sense (in fact, it can be also in the strong sense).
We here prove that
\begin{equation}\label{F-8}
D^mg(A;B)=\int_{[-1,1]}D^mg_\lambda(A;B)\,d\mu(\lambda),\qquad m\ge0.
\end{equation}
Let $r_0:=\max\{|\lambda|:\lambda\in\sigma(A)\}$ ($<1$), $r_1:=(r_0+1)/2$, and
$\Gamma_1:=\{\zeta\in\bC:|\zeta|=r_1\}$. Moreover, choose a $\delta_0>0$ such that
$\delta_0\|B\|\le(1-r_1)/2$, where $\|\cdot\|$ denotes the operator norm. By \eqref{F-2.1} and
\eqref{F-2.2}, for every $\lambda\in[-1,1]$ we have the Taylor expansion of $g_\lambda(A+tB)$ as 
\begin{equation}\label{F-9}
g_\lambda(A+tB)=\sum_{m=0}^\infty{t^m\over m!}\,D^mg_\lambda(A;B),
\qquad t\in\bR,\ |t|<\delta_0,
\end{equation}
where
$$
D^mg_\lambda(A;B)={m!\over2\pi i}\int_{\Gamma_1}g_\lambda(\zeta)
((\zeta I-A)^{-1}B)^m(\zeta I-A)^{-1}\,d\zeta,\qquad m\ge0.
$$
Since
\begin{align*}
&\big\|g_\lambda(\zeta)((\zeta I-A)^{-1}B)^m(\zeta I-A)^{-1}\big\| \\
&\qquad\le|g_\lambda(\zeta)|\cdot\big\|(\zeta I-A)^{-1}\big\|^{m+1}\|B\|^m \\
&\qquad\le{1\over1-r_1}\biggl({1\over r_1-r_0}\biggr)^{m+1}\|B\|^m
=\biggl({1\over1-r_1}\biggr)^{m+2}\|B\|^m,\qquad\zeta\in\Gamma_1,
\end{align*}
it follows that
$$
{\delta_0^m\over m!}\,\big\|\,D^mg_\lambda(A;B)\big\|
\le{r_1\over(1-r_1)^2}\biggl({\delta_0\|B\|\over1-r_1}\biggr)^m
\le{r_1\over(1-r_1)^2}\biggl({1\over2}\biggr)^m,
\qquad\lambda\in[-1,1].
$$
Hence the Taylor expansion in \eqref{F-9} is absolutely convergent in the operator norm uniformly for
$\lambda\in[-1,1]$. So, by \eqref{F-7} (for $H=A+tB$) and the termwise integration of \eqref{F-9} we
obtain
\begin{align*}
g(A+tB)&=\int_{[-1,1]}g_\lambda(A+tB)\,d\mu(\lambda) \\
&=\sum_{m=0}^\infty{t^m\over m!}\int_{[-1,1]}D^mg_\lambda(A;B)\,d\mu(\lambda),
\qquad t\in\bR,\ |t|<\delta_0,
\end{align*}
and the power series in the above right-hand side is convergent in the operator norm for
$|t|<\delta_0$. Therefore, \eqref{F-8} follows.

Furthermore, choose a smooth closed curve $\Gamma$ in $(\bC\setminus\bR)\cup(-1,1)$ surrounding
$\sigma(A+iB)$. Since $\big\|g_\lambda(\zeta)(\zeta I-A-iB)^{-1}\big\|$ is uniformly bounded for
$\zeta\in\Gamma$ and $\lambda\in[-1,1]$, by applying Fubini's theorem we have
\begin{align}
g(A+iB)&={1\over2\pi i}\int_\Gamma g(\zeta)(\zeta I-A-iB)^{-1}\,d\zeta \nonumber\\
&={1\over2\pi i}\int_\Gamma\biggl(\int_{[-1,1]}g_\lambda(\zeta)\,d\mu(\lambda)\biggr)
(\zeta I-A-iB)^{-1}\,d\zeta \nonumber\\
&=\int_{[-1,1]}\biggl({1\over2\pi i}\int_\Gamma
g_\lambda(\zeta)(\zeta I-A-iB)^{-1}\,d\zeta\biggr)\,d\mu(\lambda) \nonumber\\
&=\int_{[-1,1]}g_\lambda(A+iB)\,d\mu(\lambda). \label{F-3.8}
\end{align}

Now we are in a position to complete the proof of Theorem \ref{T-3.1}. Assume that $l=4k-2$, so $f$
is an operator $(4k-2)$-tone function on $(-1,1)$. By \eqref{F-3.8}, Lemmas \ref{L-3.2}, \ref{L-3.3},
and \eqref{F-8} we obtain
\begin{align*}
\Re f(A+iB)&=\Re P(A+iB)+\int_{[-1,1]}\Re g_\lambda(A+iB)\,d\mu(\lambda) \\
&\le\sum_{m=0}^{2k-2}{(-1)^m\over(2m)!}\,D^{2m}P(A;B)
+\int_{[-1,1]}\sum_{m=0}^{2k-2}{(-1)^m\over(2m)!}\,D^{2m}g_\lambda(A;B)\,d\mu(\lambda) \\
&=\sum_{m=0}^{2k-2}{(-1)^m\over(2m)!}\bigl\{D^{2m}P(A;B)+D^{2m}g(A;B)\bigr\} \\
&=\sum_{m=0}^{2k-2}{(-1)^m\over(2m)!}\,D^{2m}f(A;B),
\end{align*}
which completes the proof of (1). The proof of (2) is similar with $l=4k$. On the other hand, when
$l=4k-3$ or $l=4k-1$, by using \eqref{F-3.8}, Lemmas \ref{L-3.2}, \ref{L-3.4}, and \eqref{F-8} we can
similarly prove (3) or (4) under an additional assumption $B\in B(\cH)^+$.
\end{proof}

\begin{remark}\label{R-3.5}\rm
In \cite[Definition 1.4]{FHR} we also introduced the notion of {\it matrix $k$-tone functions of
order $n$} for each fixed $n\in\bN$. As shown in \cite[Proposition 2.6]{FHR}, a function on $(a,b)$
is matrix $k$-tone of order $n$ if and only if condition (a) in Section 2.3 holds with $n$ fixed.
Thus, as seen from the proof of the ``if\," part of Theorem \ref{T-3.1}, if the inequality of (1)
(or (2)--(4), respectively) holds for every $A\in\bM_n^{sa}(a,b)$ and every $B\in\bM_n^+$ of a fixed
$n\in\bN$, then $f$ is matrix $(4k-2)$-tone (or matrix $4k$, $(4k-3)$, $(4k-1)$-tone, respectively)
of order $n$. However, the converse direction seems subtle even when $n=1$. Consider simple monotone
functions $f(x):=x^p$ on $(0,\infty)$, where $p>0$. Inequality in (3) of Theorem \ref{T-3.1} when
$k=n=1$ means that $\Im(a+ib)^p\ge0$ for every $a,b>0$. This holds if and only if $(0<)\ p\le2$, but
$x^p$ is matrix $1$-tone of order $1$ for any $p>0$ since it means monotonicity in the numerical
sense. Inequality in (1) of Theorem \ref{T-3.1} when $k=n=1$ means that $\Re(a+ib)^p\le a^p$ for
every $a,b>0$. This holds if and only if $1\le p\le3$, but $x^p$ is matrix $2$-tone of order $1$ for
any $p\ge2$ since it means convexity in the usual sense.
\end{remark}

\section{Operator monotone and operator convex functions}

In this section our consideration is specialized to operator monotone (also operator monotone
decreasing) functions and operator convex functions, for which we give further characterizations
via analytic functional calculus.

First, we restate (1) of Theorem \ref{T-3.1} in the particular case $k=1$ as a corollary for
convenience of later references.

\begin{cor}\label{C-4.1}
Let $f$ be as in Theorem \ref{T-3.1}. Then the following conditions are equivalent:
\begin{itemize}
\item[\rm(i)] $f$ is operator convex on $(a,b)$;
\item[\rm(ii)] $\Re f(A+iB)\le f(A)$ for every $A\in B(\cH)^{sa}(a,b)$ and every
$B\in B(\cH)^{sa}$;
\item[\rm(iii)] $\Re f(A+iB)\le f(A)$ for every $A\in\bM_n^{sa}(a,b)$ and $B\in\bM_n^+$ of
any $n\in\bN$.
\end{itemize}
\end{cor}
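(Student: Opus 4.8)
The plan is to obtain Corollary \ref{C-4.1} as the special case $k=1$ of part (1) of Theorem \ref{T-3.1}. First I note that a continuous real function on $(a,b)$ is operator $2$-tone if and only if it is operator convex; this is the place operator convexity occupies in the operator $k$-tone hierarchy (Section 2.3), so condition (i) is the same as saying that $f$ is operator $(4k-2)$-tone with $k=1$. Next, when $k=1$ the finite sum $\sum_{m=0}^{2k-2}\frac{(-1)^m}{(2m)!}D^{2m}f(A;B)$ has only the term $m=0$, which is $D^0f(A;B)=f(A)$. Hence the inequality in Theorem \ref{T-3.1}(1) for $k=1$ is exactly $\Re f(A+iB)\le f(A)$, in both the $B(\cH)$-formulation and the matrix formulation, and the equivalence (i) $\Leftrightarrow$ (ii) $\Leftrightarrow$ (iii) follows.

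If a self-contained argument is wanted rather than a citation of the full theorem, the two implications can be recovered directly. For ``(ii) or (iii) $\Rightarrow$ (i)'': taking real parts in the Taylor expansion \eqref{F-2.1} along the path $t\mapsto A+itB$ gives $\Re f(A+i\eps B)=f(A)-\frac{\eps^2}{2}D^2f(A;B)+O(\eps^4)$ as $\eps\searrow0$, so the inequality $\Re f(A+i\eps B)\le f(A)$ forces $D^2f(A;B)\ge0$ for all $A\in\bM_n^{sa}(a,b)$ and $B\in\bM_n^+$, which is condition (a) of Section 2.3 with $k=2$, i.e.\ operator convexity. For ``(i) $\Rightarrow$ (ii)'': after the affine change of variable that takes $(a,b)$ onto $(-1,1)$ (which preserves both sides of the desired inequality, as in the proof of Theorem \ref{T-3.1}), use the integral representation of an operator convex function on $(-1,1)$ in the form $f(x)=P(x)+\int_{[-1,1]}\frac{x^2}{1-\lambda x}\,d\mu(\lambda)$ with $\deg P<2$ and $\mu\ge0$; apply the $k=1$ case of Lemma \ref{L-3.3} to each integrand, together with Lemma \ref{L-3.2} for the polynomial part, and integrate against $\mu$ using the interchange formulas \eqref{F-8} and \eqref{F-3.8}.

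The only substantive step in that route is Lemma \ref{L-3.3} with $k=1$, which is however elementary: writing $X=A+iB$ with $I-\lambda A>0$ and $C_\lambda:=\lambda(I-\lambda A)^{-1/2}B(I-\lambda A)^{-1/2}$, one has $\Re\{(I-\lambda X)^{-1}\}=(I-\lambda A)^{-1/2}(I+C_\lambda^2)^{-1}(I-\lambda A)^{-1/2}$, and $(I+C_\lambda^2)^{-1}\le I$ since $C_\lambda$ is self-adjoint. Thus I do not anticipate any real obstacle here; the content is entirely inherited from Theorem \ref{T-3.1}, and the corollary is essentially a matter of bookkeeping plus recalling that ``operator $2$-tone'' means ``operator convex''.
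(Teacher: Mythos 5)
Your proposal is correct and follows exactly the paper's route: the corollary is obtained as the $k=1$ case of Theorem \ref{T-3.1}\,(1), using that operator $2$-tone means operator convex and that the sum reduces to the single term $D^0f(A;B)=f(A)$. Your supplementary self-contained sketch (the $\eps$-expansion for one direction, the integral representation plus the $k=1$ case of Lemma \ref{L-3.3} for the other) is likewise just the specialization of the paper's own proof of Theorem \ref{T-3.1}, so there is nothing genuinely different here.
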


The next theorem is considered as the operator-valued version of L\"owner's theorem. Indeed, condition
(iv) is well-known L\"owner's characterization of operator monotone functions in terms of analytic
continuation as Pick functions, and (ii) is its operator-valued version.

\begin{thm}\label{T-4.2}
Let $f$ be as in Theorem \ref{T-3.1}. Then the following conditions are equivalent:
\begin{itemize}
\item[\rm(i)] $f$ is operator monotone on $(a,b)$;
\item[\rm(ii)] $\Im f(X)\ge0$ for every $X\in B(\cH)$ such that $\Im X>0$;
\item[\rm(iii)] $\Im f(A+iB)\ge0$ for every $A,B\in\bM_n^{sa}(a,b)$ with $B>0$ of any $n\in\bN$;
\item[\rm(iv)] $\Im f(z)\ge0$ for every $z\in\bC$ with $\Im z>0$.
\end{itemize}

Moreover, if $f$ is a non-constant operator monotone function on $(a,b)$, then $\Im f(X)>0$ for
every $X\in B(\cH)$ with $\Im X>0$.
\end{thm}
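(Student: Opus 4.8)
The plan is to prove the cycle (i) $\Rightarrow$ (ii) $\Rightarrow$ (iii) $\Rightarrow$ (i) together with (i) $\Leftrightarrow$ (iv). The equivalence (i) $\Leftrightarrow$ (iv) is essentially L\"owner's theorem: under the standing hypothesis the extension of $f$ is analytic on $(\bC\setminus\bR)\cup(a,b)$, so ``$f$ operator monotone on $(a,b)$'' is, by L\"owner's theorem, the same as ``the extension of $f$ is a Pick function'', i.e. $\Im f(z)\ge0$ for every $z\in\bC^+$; uniqueness of analytic continuation identifies the Pick extension with the given one. The implication (ii) $\Rightarrow$ (iii) is immediate, since for $A\in\bM_n^{sa}(a,b)$ and self-adjoint $B>0$ one has $\Im(A+iB)>0$, so Lemma \ref{L-2.1}(1) applies and the matrix case is subsumed in (ii). Thus the work is in (i) $\Rightarrow$ (ii) and in (iii) $\Rightarrow$ (i).

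For (i) $\Rightarrow$ (ii) I would proceed exactly along the lines of Section 3. Let $X\in B(\cH)$ with $\Im X>0$. By the affine reduction used in the proof of Theorem \ref{T-3.1} we may assume $(a,b)=(-1,1)$, and then by \cite[Theorem 4.1]{FHR} (the case $l=1$) an operator monotone $f$ on $(-1,1)$ has the integral expression
$$
f(x)=c+\int_{[-1,1]}{x\over1-\lambda x}\,d\mu(\lambda),\qquad x\in(-1,1),
$$
with $c\in\bR$ and $\mu$ a finite positive measure on $[-1,1]$. Since $\sigma(X)\subset\bC^+$ by Lemma \ref{L-2.1}(1), choose a smooth closed contour $\Gamma\subset\bC^+$ surrounding $\sigma(X)$; with $g_\lambda(x):=x/(1-\lambda x)$, the poles of $g_\lambda$ lie in $\bR\setminus(-1,1)$, so $g_\lambda(\zeta)(\zeta I-X)^{-1}$ is uniformly bounded on the compact set $\Gamma\times[-1,1]$, and the Fubini argument yielding \eqref{F-3.8} gives
$$
f(X)=cI+\int_{[-1,1]}X(I-\lambda X)^{-1}\,d\mu(\lambda).
$$
Taking imaginary parts, $\Im f(X)=\int_{[-1,1]}\Im\{X(I-\lambda X)^{-1}\}\,d\mu(\lambda)$. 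For $\lambda\ne0$, $X(I-\lambda X)^{-1}=\lambda^{-1}\bigl((I-\lambda X)^{-1}-I\bigr)$, hence $\Im\{X(I-\lambda X)^{-1}\}=\lambda^{-1}\Im\{(I-\lambda X)^{-1}\}$; since $\Im(I-\lambda X)=-\lambda\,\Im X$ is positive definite for $\lambda<0$ and negative definite for $\lambda>0$, the computation in the proof of Lemma \ref{L-2.1} shows $\Im\{(I-\lambda X)^{-1}\}$ is negative definite for $\lambda<0$ and positive definite for $\lambda>0$, so that $\lambda^{-1}\Im\{(I-\lambda X)^{-1}\}$ is positive definite in either case (the case $\lambda=0$ being just $\Im X>0$). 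Therefore $\Im f(X)\ge0$, proving (ii). Moreover, if $f$ is non-constant then $\mu\ne0$, and then $\langle\xi,\Im f(X)\xi\rangle=\int_{[-1,1]}\langle\xi,\Im\{X(I-\lambda X)^{-1}\}\xi\rangle\,d\mu(\lambda)>0$ for every $\xi\ne0$, which is the final assertion of the theorem.

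For (iii) $\Rightarrow$ (i): as in the derivation of \eqref{F-3.2} with $l=1$, for $A\in\bM_n^{sa}(a,b)$, $B\in\bM_n^+$ and $\eps\searrow0$ one has $\Im f(A+i\eps B)=\eps\,D^1f(A;B)+O(\eps^3)$; dividing by $\eps$, letting $\eps\searrow0$ and using (iii) together with a routine scaling and a continuity argument (replacing $B$ by $B+\delta I$ and letting $\delta\searrow0$) gives $D^1f(A;B)\ge0$ for all such $A,B$ and all $n\in\bN$. This is precisely condition (a) of Section 2.3 with $k=1$, i.e. $f$ is operator monotone, which closes the cycle.

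I expect the main obstacle to be exactly the point where Section 3 was delicate: justifying the operator-valued representation $f(X)=cI+\int X(I-\lambda X)^{-1}\,d\mu(\lambda)$ for $X$ with $\Im X>0$ when $\Re X$ need not have spectrum in $(a,b)$ — that is, the legitimacy of interchanging the $\mu$-integral with the Cauchy integral defining $f(X)$. This rests on the uniform boundedness of $g_\lambda(\zeta)(\zeta I-X)^{-1}$ on $\Gamma\times[-1,1]$ noted above; the rest is classical (L\"owner's theorem) or a direct operator computation as in the proof of Lemma \ref{L-2.1}.
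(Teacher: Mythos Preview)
Your argument follows the same route as the paper: (i) $\Leftrightarrow$ (iv) via L\"owner, (ii) $\Rightarrow$ (iii) trivially, (iii) $\Rightarrow$ (i) by the Taylor argument from the ``if'' part of Theorem \ref{T-3.1}, and the main work (i) $\Rightarrow$ (ii) via the integral representation on $(-1,1)$ pushed through the functional calculus to obtain $f(X)=cI+\int_{[-1,1]}X(I-\lambda X)^{-1}\,d\mu(\lambda)$, followed by the termwise check $\Im\{X(I-\lambda X)^{-1}\}>0$. Your verification of this last positivity via $\lambda^{-1}\Im\{(I-\lambda X)^{-1}\}$ and the sign of $\Im(I-\lambda X)$ is correct; the paper instead computes explicitly $\Im\{X(I-\lambda X)^{-1}\}=B^{-1/2}(C_\lambda^2+\lambda^2 I)^{-1}B^{-1/2}$ with $C_\lambda:=B^{-1/2}(I-\lambda A)B^{-1/2}$, but the content is the same.

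There is one small gap in your treatment of the final strict assertion. Showing $\langle\xi,\Im f(X)\,\xi\rangle>0$ for every $\xi\ne0$ only yields that $\Im f(X)$ is positive and injective; on an infinite-dimensional $\cH$ this is strictly weaker than $\Im f(X)>0$ in the paper's sense (positive \emph{and invertible}). What is needed is a uniform lower bound $\Im\{X(I-\lambda X)^{-1}\}\ge\delta I$ for all $\lambda\in[-1,1]$, so that $\Im f(X)\ge\delta\,\mu([-1,1])\,I>0$ whenever $\mu\ne0$. The paper's explicit formula delivers this immediately (since $\|C_\lambda\|$ is bounded uniformly in $\lambda\in[-1,1]$, one has $(C_\lambda^2+\lambda^2 I)^{-1}\ge cI$ for some $c>0$). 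In your framework the same conclusion follows from the norm-continuity of $\lambda\mapsto\Im\{X(I-\lambda X)^{-1}\}$ on the compact interval $[-1,1]$ combined with each value being positive invertible; you should make this step explicit.
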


\begin{proof}
The implications (ii) $\Rightarrow$ (iii) and (ii) $\Rightarrow$ (iv) are trivial,
(i) $\Leftrightarrow$ (iii) is the particular $k=1$ case of (3) of Theorem \ref{T-3.1}, and
(i) $\Leftrightarrow$ (iv) is L\"owner's theorem. So we may prove that (i) $\Rightarrow$ (ii), which
is not contained in Theorem \ref{T-3.1}. Note by Lemma \ref{L-2.1} that $f(X)$ in (ii) is well
defined.

As in the proof of Theorem \ref{T-3.1} we may assume that $(a,b)=(-1,1)$. So assume that $f$ is
operator monotone on $(-1,1)$; then the well-known integral expression of $f$ is
$$
f(x)=f(0)+\int_{[-1,1]}{x\over1-\lambda x}\,d\mu(\lambda),\qquad x\in(-1,1),
$$
where $\mu$ is a finite positive measure on $[-1,1]$. Assume that $X=A+iB$ with $A,B\in B(\cH)^{sa}$
and $B>0$. For every $\lambda\in[-1,1]$, letting $C_\lambda:=B^{-1/2}(I-\lambda A)B^{-1/2}$ we have
$$
I-\lambda X=B^{1/2}(C_\lambda-i\lambda I)B^{1/2}
$$
so that
\begin{equation}\label{F-4.1}
(I-\lambda X)^{-1}
=B^{-1/2}\bigl(C_\lambda^2+\lambda^2I\bigr)^{-1}(C_\lambda+i\lambda I)B^{-1/2}.
\end{equation}
Since $C_0=B^{-1}>0$, it is clear that $\bigl(C_\lambda^2+\lambda^2I\bigr)^{-1}(C_\lambda+i\lambda I)$
is continuous in the operator norm in $\lambda\in[-1,1]$. Hence $\big\|(I-\lambda X)^{-1}\big\|$ is
uniformly bounded for $\lambda\in[-1,1]$ so that the analytic functional calculus $f(X)$ is given as
$$
f(X)=f(0)I+\int_{[-1,1]}X(I-\lambda X)^{-1}\,d\mu(\lambda).
$$
Therefore,
$$
\Im f(X)=\int_{[-1,1]}\Im\bigl\{X(I-\lambda X)^{-1}\bigr\}\,d\mu(\lambda).
$$
So it suffices to show that
$$
\Im\bigl\{X(I-\lambda X)^{-1}\bigr\}>0,\qquad\lambda\in[-1,1].
$$
This is obvious for $\lambda=0$. For $\lambda\in[-1,1]$, $\lambda\ne0$, since
$$
X(I-\lambda X)^{-1}=-{1\over\lambda}\,I+{1\over\lambda}(I-\lambda X)^{-1},
$$
we obtain, thanks to \eqref{F-4.1},
$$
\Im\bigl\{X(I-\lambda X)^{-1}\bigr\}={1\over\lambda}\,\Im\bigl\{(I-\lambda X)^{-1}\bigr\}
=B^{-1/2}\bigl(C_\lambda^2+\lambda^2I\bigr)^{-1}B^{-1/2}>0.
$$
Moreover, the above proof shows that if $\Im f(X)>0$ is not satisfied, then $\mu=0$ and so
$f$ is a constant function, implying the second assertion of the theorem.
\end{proof}

In the next theorem we have some further characterization results when $(a,b)=(0,\infty)$.

\begin{thm}\label{T-4.3}
Let $f$ be a continuous real function on $(0,\infty)$ analytically continued to $\bC^+$.
Then:
\begin{itemize}
\item[\rm(1)] The following conditions are equivalent:
\begin{itemize}
\item[\rm(i)] $f$ is non-negative and operator monotone on $(0,\infty)$;
\item[\rm(ii)] $f$ admits the integral expression
\begin{equation}\label{F-4.2}
f(x)=\alpha+\beta x+\int_{(0,\infty)}{x\over x+\lambda}\,d\mu(\lambda),
\qquad x\in(0,\infty),
\end{equation}
where $\alpha,\beta\ge0$ and $\mu$ is a positive measure on $(0,\infty)$ such that
$\int_{(0,\infty)}(1+\lambda)^{-1}\,d\mu(\lambda)\allowbreak<+\infty$;
\item[\rm(iii)] $0\le f(\Re X)\le\Re f(X)$ for every $X\in B(\cH)$ with $\Re X>0$.
\end{itemize}

\item[\rm(2)] The following conditions are equivalent:
\begin{itemize}
\item[\rm(i)] there are a $\beta\ge0$ and a non-negative operator monotone decreasing function $g$
on $(0,\infty)$ such that $f(x)=\beta x+g(x)$ for all $x\in(0,\infty)$;
\item[\rm(ii)] $f$ admits the integral expression
$$
f(x)=\alpha+\beta x+\int_{[0,\infty)}{1\over x+\lambda}\,d\mu(\lambda),
\qquad x\in(0,\infty),
$$
where $\alpha,\beta\ge0$ and $\mu$ is a positive measure on $[0,\infty)$ such that
$\int_{[0,\infty)}(1+\lambda)^{-1}\,d\mu(\lambda)\allowbreak<+\infty$;
\item[\rm(iii)] $0\le\Re f(X)\le f(\Re X)$ for every $X\in B(\cH)$ with $\Re X>0$;
\item[\rm(iv)] $\Re f(X)\le f(\Re X)$ for every $X\in B(\cH)$ with $\Re X>0$, and $\Re f(z)\ge0$ for
every $z\in\bC$ with $\Re z>0$.
\end{itemize}

Moreover, if the above equivalent conditions are satisfied, then $\Re f(X)>0$ for every $X\in B(\cH)$
with $\Re X>0$ unless $f$ is identically zero.

\item[\rm(3)] Assume that $f$ is non-negative on $(0,\infty)$ and it is not identically zero. Then
the following conditions are equivalent:
\begin{itemize}
\item[\rm(i)] $f$ is operator monotone decreasing on $(0,\infty)$;
\item[\rm(ii)] $f$ admits the integral expression
$$
f(x)=\alpha+\int_{[0,\infty)}{1\over x+\lambda}\,d\mu(\lambda),\qquad x\in(0,\infty),
$$
where $\alpha\ge0$ and $\mu$ is a positive measure on $[0,\infty)$ such that
$\int_{[0,\infty)}(1+\lambda)^{-1}\,d\mu(\lambda)<+\infty$;
\item[\rm(iii)] $\Im f(X)\ge0$ for every $X\in B(\cH)$ with $\Im X<0$;
\item[\rm(iv)] $0<\Re f(X)\le f(\Re X)$ and $\Re(\log f(X))\le\log f(\Re X)$ for every $X\in B(\cH)$
with $\Re X>0$.
\end{itemize}
\end{itemize}
\end{thm}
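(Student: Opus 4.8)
The plan is to prove the three parts of Theorem~\ref{T-4.3} by combining the integral representations of non-negative operator monotone and operator monotone decreasing functions (which give the equivalences (i)~$\Leftrightarrow$~(ii) in each part) with the analytic functional calculus estimates already developed, specialized to the kernel functions $x/(x+\lambda)$ and $1/(x+\lambda)$. For part~(1), the equivalence (i)~$\Leftrightarrow$~(ii) is the classical representation of non-negative operator monotone functions on $(0,\infty)$; for (ii)~$\Rightarrow$~(iii) I would write, for $X$ with $\Re X>0$, that $f(X)=\alpha I+\beta X+\int_{(0,\infty)}X(X+\lambda I)^{-1}\,d\mu(\lambda)$, justifying the integral exchange exactly as in the proof of Theorem~\ref{T-4.2} via a uniform-bound argument on $\|(X+\lambda I)^{-1}\|$ (here using $\Re X>0$ to control the resolvent for $\lambda\ge0$), and then showing the pointwise inequality $f(\Re X)\le\Re\{X(X+\lambda I)^{-1}\}$ for each $\lambda$. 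Writing $X=A+iB$ with $A>0$ and using the factorization $X+\lambda I=A^{1/2}(I+\lambda A^{-1}+iA^{-1/2}BA^{-1/2})A^{1/2}$ reduces this to a positivity statement for $2\times2$-type operator expressions; the key is that $\Re\{X(X+\lambda I)^{-1}\}-\frac{A}{A+\lambda}$ should come out manifestly positive after the usual real/imaginary splitting of $(I+iC)^{-1}=(I+C^2)^{-1}(I-iC)$. The implication (iii)~$\Rightarrow$~(i) follows by restricting to self-adjoint $X=A\in B(\cH)^{sa}(0,\infty)$ (giving $f\ge0$) and invoking Corollary~\ref{C-4.1} together with Theorem~\ref{T-4.2}: the condition $\Re f(A+iB)\ge f(A)$ for all self-adjoint $B$, combined with operator concavity forcing $\Re f(A+iB)\le f(A)$, pins down the structure, or more directly one uses that a non-negative function satisfying the stated resolvent inequality must be operator monotone by Theorem~\ref{T-4.2}'s criterion applied after analytic continuation.

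For part~(2) the strategy mirrors part~(1) with the kernel $1/(x+\lambda)$ in place of $x/(x+\lambda)$: (i)~$\Leftrightarrow$~(ii) is the representation of functions of the form (non-negative constant) plus (non-negative operator monotone decreasing), where the term $\beta x$ accounts for the linear part and the measure now ranges over $[0,\infty)$ to allow an atom at $0$ contributing a constant. For (ii)~$\Rightarrow$~(iii) I would again pass $f$ through the integral, establish $\Re f(X)\ge0$ from $\Re\{(X+\lambda I)^{-1}\}\ge0$ when $\Re X>0$ and $\lambda\ge0$ (immediate from the factorization above, since $(I+C^2)^{-1}\ge0$), and establish $\Re f(X)\le f(\Re X)$ from the pointwise bound $\Re\{(X+\lambda I)^{-1}\}\le(A+\lambda I)^{-1}$, i.e.\ $A^{-1/2}(I+C^2)^{-1}A^{-1/2}\le A^{-1}$ with $C=A^{-1/2}(\lambda A^{-1}\cdot\text{stuff})$—this is just $(I+C^2)^{-1}\le I$. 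The implication (iii)~$\Rightarrow$~(iv) is trivial, and (iv)~$\Rightarrow$~(i) is the delicate converse: from $\Re f(X)\le f(\Re X)$ for all $X$ with $\Re X>0$ one wants to deduce operator concavity of $f$ on $(0,\infty)$ after subtracting a linear term, and $\Re f(z)\ge0$ on the right half-plane will force the relevant measure to be non-negative and the linear coefficient non-negative. Here I would reduce to the self-adjoint case to get that $g(x):=f(x)-\beta x$ (with $\beta$ extracted from the behavior of $f$ at $\infty$) is operator concave and non-negative, hence operator monotone decreasing by the standard fact that a non-negative operator concave function on $(0,\infty)$ is operator monotone decreasing. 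The final ``moreover'' statement about strict positivity $\Re f(X)>0$ is handled exactly as the strictness assertion in Theorem~\ref{T-4.2}: trace through the integral and observe that $\Re f(X)$ fails to be strictly positive only if $\alpha=\beta=0$ and $\mu=0$.

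For part~(3), the equivalence (i)~$\Leftrightarrow$~(ii) is again the integral representation, now for non-negative operator monotone decreasing functions not identically zero (no linear term, since $\beta>0$ would destroy monotone-decreasingness, and the constant $\alpha$ absorbs any atom at $\infty$ in a limiting sense). The equivalence with (iii) follows from Theorem~\ref{T-4.2} applied to $-f$ after the reflection $z\mapsto\bar z$: $f$ is operator monotone decreasing iff $-f$ is operator monotone iff $\Im(-f)(X)\ge0$ when $\Im X>0$ iff $\Im f(X)\le0$ when $\Im X>0$ iff (by $f(\bar z)=\overline{f(z)}$) $\Im f(X)\ge0$ when $\Im X<0$—though I must be slightly careful that the reflection identity $f(\bar X)=\overline{f(X)}$ holds at the operator level, which it does because it holds for resolvents and passes through the Cauchy integral. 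The subtle part is (iv): the condition $\Re(\log f(X))\le\log f(\Re X)$ is a logarithmic-majorization-type statement, and I expect this to be the main obstacle of the whole theorem. To handle it I would use that $\log$ is operator monotone on $(0,\infty)$ together with the representation from part~(2) applied to $f$ itself (an operator monotone decreasing non-negative $f$ falls under part~(2) with $\beta=0$), deducing first $0<\Re f(X)\le f(\Re X)$ and then composing with $\log$: since $\log\circ f$ should again be operator monotone decreasing (as $f$ is operator monotone decreasing and $\log$ operator monotone increasing, the composition $\log\circ f$ is operator monotone decreasing), part~(2)'s inequality $\Re(\log f)(X)\le(\log f)(\Re X)=\log f(\Re X)$ applies directly—the only gap being whether $\log\circ f$ is genuinely analytically continued to $\bC^+$ and non-negative or merely real, which can be arranged by noting $\log f$ need not be non-negative but part~(2)'s inequality $\Re h(X)\le h(\Re X)$ for operator monotone decreasing $h$ does not actually require $h\ge0$, only the upper bound. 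Thus the hard step is verifying that the machinery of part~(2) applies to $h=\log\circ f$ without the non-negativity hypothesis, which I would do by re-examining the proof of (ii)~$\Rightarrow$~(iii) in part~(2) and isolating precisely the half of it—namely $\Re h(X)\le h(\Re X)$—that survives when the constant $\alpha$ is allowed to be any real number.
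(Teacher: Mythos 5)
Your overall strategy (integral representations for (i)$\Leftrightarrow$(ii), direct resolvent computations for the positivity statements, and Corollary \ref{C-4.1}/Theorem \ref{T-4.2} for the remaining equivalences) matches the paper's, but there is a genuine gap in part (2), implication (iv)$\Rightarrow$(i). From $\Re f(X)\le f(\Re X)$, Corollary \ref{C-4.1} gives operator \emph{convexity} of $f$, not concavity, and your cited ``standard fact that a non-negative operator concave function on $(0,\infty)$ is operator monotone decreasing'' is false as stated (such a function is operator monotone \emph{increasing}; and the relevant convex analogue also fails: $x^2$ is non-negative and operator convex but is not of the form $\beta x+g$ with $g$ operator monotone decreasing). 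The crux, which your sketch omits, is how the scalar condition $\Re f(z)\ge0$ on the right half-plane is exploited: the paper inserts the operator-convexity representation $f(x)=f(1)+f'(1)(x-1)+\gamma(x-1)^2+\int_{[0,\infty)}\frac{(x-1)^2}{x+\lambda}\,d\mu(\lambda)$ into $\Re f(1+ib)\ge0$ and lets $b^2\to\infty$, which forces $\gamma=0$ and, by monotone convergence, $\int(1+\lambda)\,d\mu\le f(1)<\infty$; only then can the kernel $(x-1)^2/(x+\lambda)$ be rearranged into $\alpha+\beta x+\int(1+\lambda)^2(x+\lambda)^{-1}d\mu$, after which a second passage to the limit $b^2\to\infty$ in $\Re f(a+ib)\ge0$ yields $\alpha,\beta\ge0$. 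Your plan to ``extract $\beta$ from the behavior of $f$ at $\infty$'' uses only information on the real axis and cannot distinguish, say, $x^2$ from an admissible function, so this step as proposed would fail.

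Two smaller points. In part (3) you prove only (i)$\Rightarrow$(iv); the converse requires the Ando--Hiai characterization that a positive $f$ on $(0,\infty)$ is operator monotone decreasing if and only if both $f$ and $\log f$ are operator convex, combined with Corollary \ref{C-4.1} applied to each of $f$ and $\log f$ --- your composition argument ($\log\circ f$ operator monotone decreasing, hence operator convex) handles the forward direction but gives no way back from the two inequalities in (iv) to operator monotone decreasingness. (Your worry about non-negativity of $\log f$ is a non-issue: Corollary \ref{C-4.1} needs no positivity, only operator convexity.) In part (1), your phrase ``operator concavity forcing $\Re f(A+iB)\le f(A)$'' has the inequality backwards; the correct one-line argument is that $f(\Re X)\le\Re f(X)$ for all self-adjoint perturbations is, by Corollary \ref{C-4.1} applied to $-f$, equivalent to operator concavity of $f$, which together with $f\ge0$ is equivalent to operator monotonicity on $(0,\infty)$. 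Your direct kernel estimates for (ii)$\Rightarrow$(iii) in parts (1) and (2) are correct but only the lower bounds $\Re f(X)\ge0$ (resp.\ $\Re f(X)\ge f(\Re X)$ in (1)) actually need them; the other halves come for free from Corollary \ref{C-4.1}.
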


\begin{proof}
(1)\enspace
(i) $\Leftrightarrow$ (ii) is well-known (see, e.g., \cite[\S V.4]{Bh1}, \cite[\S2,7]{Hi}); we state
it just for the sake of completeness. The first inequality of (iii) is of course equivalent to the
non-negativity of $f$. Hence (i) $\Leftrightarrow$ (iii) follows from Corollary \ref{C-4.1} since
operator monotonicity and operator concavity are equivalent for a function on $(0,\infty)$.

(2)\enspace
(i) $\Leftrightarrow$ (ii) here is also well-known (see \cite{Ha}, \cite[Theorems 3.1]{AH}). Assume
(ii). Since $f$ is operator convex, Corollary \ref{C-4.1} implies that the second inequality of (iii)
holds for every $X\in B(\cH)$ with $\Re X>0$. Moreover, write $X=A+iB$ with $A,B\in B(\cH)^{sa}$ and
$A>0$. We then have
$$
\Re f(X)=\alpha I+\beta A+\int_{[0,\infty)}\Re\bigl\{(X+\lambda I)^{-1}\bigr\}\,d\mu(\lambda).
$$
For every $\lambda\ge0$ we have
$$
(X+\lambda I)^{-1}
=(A+\lambda I)^{-1/2}\bigl(I+C_\lambda^2\bigr)^{-1}(I-iC_\lambda)(A+\lambda I)^{-1/2},
$$
where $C_\lambda:=(A+\lambda I)^{-1/2}B(A+\lambda I)^{-1/2}$, so that
$$
\Re\bigl\{(X+\lambda I)^{-1}\bigr\}
=(A+\lambda I)^{-1/2}\bigl(I+C_\lambda^2\bigr)^{-1}(A+\lambda I)^{-1/2}>0.
$$
Therefore, the first inequality of (iii) follows. Moreover, if $\Re f(X)>0$ does not hold for some
$X$ as above, then $\alpha=\beta=0$ and $\mu=0$, i.e., $f$ is identically zero.

(iii) $\Rightarrow$ (iv) is trivial. Finally, assume (iv). By Corollary \ref{C-4.1}, $f$ is
operator convex so that by \cite[Theorem 5.1]{FHR} it admits the integral expression
$$
f(x)=f(1)+f'(1)(x-1)+\gamma(x-1)^2+\int_{[0,\infty)}{(x-1)^2\over x+\lambda}
\,d\mu(\lambda),\qquad x\in(0,\infty),
$$
where $\gamma\ge0$ and $\mu$ is a positive measure on $[0,\infty)$ such that
$\int_{[0,\infty)}(1+\lambda)^{-1}\,d\mu(\lambda)<+\infty$. For $z=1+ib\in\bC$ with any
$b\in\bR$, (iv) gives
$$
0\le\Re f(z)=f(1)-\gamma b^2-\int_{[0,\infty)}{(1+\lambda)b^2\over(1+\lambda)^2+b^2}
\,d\mu(\lambda).
$$
This implies that $\gamma=0$. Furthermore, since
$(1+\lambda)b^2/\{(1+\lambda)^2+b^2\}\nearrow1+\lambda$ as $b^2\nearrow\infty$, by the
monotone convergence theorem we have
$$
\int_{[0,\infty)}(1+\lambda)\,d\mu(\lambda)\le f(1)<+\infty.
$$
Since
$$
{(x-1)^2\over x+\lambda}=x-(2+\lambda)+{(1+\lambda)^2\over x+\lambda},
$$
we can write
\begin{align*}
f(x)&=f(1)+f'(1)(x-1) \\
&\qquad+\mu([0,\infty))x-\int_{[0,\infty)}(2+\lambda)\,d\mu(\lambda)
+\int_{[0,\infty)}{(1+\lambda)^2\over x+\lambda}\,d\mu(\lambda) \\
&=\alpha+\beta x+\int_{[0,\infty)}{1+\lambda\over x+\lambda}\,d\nu(\lambda),
\end{align*}
where $\alpha,\beta\in\bR$ and $d\nu(\lambda):=(1+\lambda)\,d\mu(\lambda)$ is a finite positive
measure on $[0,\infty)$. For every $z=a+ib$ with $a>0$ and $b\in\bR$ we have
$$
0\le\Re f(z)=\alpha+\beta a
+\int_{[0,\infty)}{(a+\lambda)(1+\lambda)\over(a+\lambda)^2+b^2}\,d\nu(\lambda).
$$
By the bounded convergence theorem the above integral term converges to $0$ as $b^2\nearrow\infty$,
so $\alpha+\beta a\ge0$ for all $a>0$. This gives $\alpha,\beta\ge0$. Hence (iv) $\Rightarrow$ (ii)
is proved.

(3)\enspace
(i) $\Leftrightarrow$ (ii) is well-known as in the proof of (2). (i) $\Leftrightarrow$ (iii)
immediately follows from Theorem \ref{T-4.2} when applied to $f(x^{-1})$ on $(0,\infty)$ since
$\Im X<0$ is equivalent to that $X$ is invertible and $\Im X^{-1}>0$. It is known
\cite[Theorem 3.1]{AH} that a continuous function $f>0$ on $(0,\infty)$ is operator monotone
decreasing if and only if both $f$ and $\log f$ are operator convex. Moreover, since $f$ is not
identically zero, (i) implies from (2) that $0<\Re f(X)\le f(\Re X)$ for every $X\in B(\cH)$ with
$\Re X>0$. Hence (i) $\Leftrightarrow$ (iv) follows from Corollary \ref{C-4.1}. (Here, note that
$\log f(X)$ is well defined for $\Re X>0$ since $\Re f(X)>0$ implies that the spectrum $\sigma(f(X))$
is in $\{z\in\bC:\Re z>0\}$.)
\end{proof}

According to \cite[Proposition 3.9]{FHR}, if $f$ is an operator $k_0$-tone function on $(a,b)$ for
some $k_0\in\bN$, then it is operator $(k_0+2k)$-tone on $(a,b)$ for any $k\in\bN$. Hence, if $f$ is
operator $2k_0$-tone on $(a,b)$ for some $k_0\in\bN$, then the inequalities in (1) and in (2) of
Theorem \ref{T-3.1} hold for every $k\ge[k_0/2]+1$ and for every $k\ge[(k_0+1)/2]$, respectively.
Also, if $f$ is operator $(2k_0-1)$-tone on $(a,b)$ for some $k_0\in\bN$, then the inequalities in
(3) and in (4) of Theorem \ref{T-3.1} hold for every $k\ge[k_0/2]+1$ and for every $k\ge[(k_0+1)/2]$,
respectively. Moreover, according to \cite[Proposition 5.2]{FHR}, if $f$ is operator $k_0$-tone on
$(0,\infty)$ for some $k_0\in\bN$, then $(-1)^kf$ is operator $(k_0+k)$-tone on $(0,\infty)$ for any
$k\in\bN$. Therefore, Theorem 3.1 yields the following:

\begin{cor}\label{C-4.4}
Assume that $f$ is operator monotone on $(0,\infty)$. Then for every $k\in\bN$,
$$
\sum_{m=0}^{2k-2}{(-1)^m\over(2m)!}D^{2m}f(A;B)\le\Re f(A+iB)
\le\sum_{m=0}^{2k-1}{(-1)^m\over(2m)!}D^{2m}f(A;B)
$$
for every $A,B\in B(\cH)^{sa}$ with $A>0$, and
$$
\sum_{m=1}^{2k-2}{(-1)^{m-1}\over(2m-1)!}D^{2m-1}f(A;B)\le\Im f(A+iB)
\le\sum_{m=1}^{2k-1}{(-1)^{m-1}\over(2m-1)!}D^{2m-1}f(A;B)
$$
for every $A,B\in B(\cH)^{sa}$ with $A>0$ and $B\ge0$.
\end{cor}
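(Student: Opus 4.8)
The plan is to deduce all four two-sided estimates directly from Theorem~\ref{T-3.1}, fed with either $f$ or $-f$, using \cite[Proposition 5.2]{FHR} to certify the exact tonicity degree needed in each residue class modulo~$4$. The mechanism is this: an operator monotone $f$ is operator $1$-tone, so by \cite[Proposition 5.2]{FHR} the function $(-1)^jf$ is operator $(1+j)$-tone on $(0,\infty)$ for every $j\in\bN$. Choosing $j$ suitably places the degree in each of the four classes, and the accompanying sign $(-1)^j$ is precisely what turns the one-directional inequalities of Theorem~\ref{T-3.1} into the two-sided form asserted here.

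Concretely, fix $k\in\bN$ and apply \cite[Proposition 5.2]{FHR} with $j=4k-4,\,4k-3,\,4k-2,\,4k-1$ in turn. Since $(-1)^j$ equals $+1$ for $j$ even and $-1$ for $j$ odd, this shows that $f$ is operator $(4k-3)$-tone, $-f$ is operator $(4k-2)$-tone, $f$ is operator $(4k-1)$-tone, and $-f$ is operator $4k$-tone. I would then invoke Theorem~\ref{T-3.1} with this same $k$. Parts (3) and (4), applied to $f$ itself, give at once the lower and upper bounds for $\Im f(A+iB)$, valid for $A>0$ and $B\ge0$ exactly as stated there. Parts (1) and (2), applied to $g:=-f$, give $\Re g(A+iB)\le\sum_{m=0}^{2k-2}{(-1)^m\over(2m)!}D^{2m}g(A;B)$ and $\Re g(A+iB)\ge\sum_{m=0}^{2k-1}{(-1)^m\over(2m)!}D^{2m}g(A;B)$; since $\Re g(A+iB)=-\Re f(A+iB)$ and $D^{2m}g(A;B)=-D^{2m}f(A;B)$, multiplying each inequality by $-1$ reverses its direction and yields exactly the lower and upper bounds for $\Re f(A+iB)$, valid for $A>0$ and $B$ self-adjoint.

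There is no analytic difficulty to surmount; the entire content is the bookkeeping of degrees modulo~$4$ together with the single sign flip. The one point deserving emphasis is that the two real-part inequalities of the corollary run opposite to the literal statements of Theorem~\ref{T-3.1}(1),(2); this is not a contradiction but simply the result of applying those parts to $-f$ in place of $f$, the reversal being furnished by the odd exponent in $(-1)^jf$ from \cite[Proposition 5.2]{FHR}. I would also record that the restriction to the half-line $(0,\infty)$ is essential, since \cite[Proposition 5.2]{FHR}, which supplies the sign change, holds only there, whereas on a general $(a,b)$ only the degree-raising \cite[Proposition 3.9]{FHR}, adding even increments and hence no sign flip, is available.
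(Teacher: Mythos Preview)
Your argument is correct and is precisely the approach the paper takes: the paragraph immediately preceding Corollary~\ref{C-4.4} invokes \cite[Proposition 5.2]{FHR} to conclude that $(-1)^jf$ is operator $(1+j)$-tone on $(0,\infty)$, and then appeals to Theorem~\ref{T-3.1} in each residue class modulo~$4$, exactly as you spell out. The only cosmetic point is that your case $j=4k-4$ with $k=1$ gives $j=0$, which is simply the hypothesis that $f$ is operator $1$-tone rather than an application of \cite[Proposition 5.2]{FHR}.
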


\begin{cor}\label{C-4.5}
Assume that $f$ is operator convex on $(0,\infty)$. Then for every $k\in\bN$,
$$
\sum_{m=0}^{2k-1}{(-1)^m\over(2m)!}D^{2m}f(A;B)\le\Re f(A+iB)
\le\sum_{m=0}^{2k-2}{(-1)^m\over(2m)!}D^{2m}f(A;B)
$$
for every $A,B\in B(\cH)^{sa}$ with $A>0$, and
$$
\sum_{m=1}^{2k-1}{(-1)^{m-1}\over(2m-1)!}D^{2m-1}f(A;B)\le\Im f(A+iB)
\le\sum_{m=1}^{2k}{(-1)^{m-1}\over(2m-1)!}D^{2m-1}f(A;B)
$$
for every $A,B\in B(\cH)^{sa}$ with $A>0$ and $B\ge0$.
\end{cor}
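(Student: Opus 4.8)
The plan is to read off Corollary \ref{C-4.5} from Theorem \ref{T-3.1}, exactly as Corollary \ref{C-4.4} is read off from it, the only change being the residues modulo $4$. An operator convex function on $(0,\infty)$ is precisely an operator $2$-tone function; hence by \cite[Proposition 3.9]{FHR} it is operator $(2+2j)$-tone for every $j\ge0$, and by \cite[Proposition 5.2]{FHR} (valid on $(0,\infty)$) the function $-f$ is operator $3$-tone, hence operator $(3+2j)$-tone for every $j\ge0$. In particular, for each fixed $k\in\bN$ the function $f$ is both operator $(4k-2)$-tone and operator $4k$-tone, while $-f$ is both operator $(4k-1)$-tone and operator $(4k+1)$-tone.

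Next I would feed these four tonicities into the four cases of Theorem \ref{T-3.1}. Part (1) applied to $f$, which is operator $(4k-2)$-tone, gives $\Re f(A+iB)\le\sum_{m=0}^{2k-2}\frac{(-1)^m}{(2m)!}D^{2m}f(A;B)$ for all $A>0$ and all self-adjoint $B$; part (2) applied to $f$, which is operator $4k$-tone, gives the companion lower bound with sum $\sum_{m=0}^{2k-1}$. Together these two are the first displayed chain. For the imaginary part I would apply parts (4) and (3) of Theorem \ref{T-3.1} to $-f$ and then negate. Part (4) applied to $-f$ (operator $(4k-1)$-tone) reads $\Im\{-f(A+iB)\}\le\sum_{m=1}^{2k-1}\frac{(-1)^{m-1}}{(2m-1)!}D^{2m-1}(-f)(A;B)$; since $D^{m}(-f)=-D^{m}f$, multiplying by $-1$ turns this into $\Im f(A+iB)\ge\sum_{m=1}^{2k-1}\frac{(-1)^{m-1}}{(2m-1)!}D^{2m-1}f(A;B)$. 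Part (3) applied to $-f$ with the running index taken to be $k+1$ (so that $4(k+1)-3=4k+1$, and $-f$ is indeed operator $(4k+1)$-tone) reads $\Im\{-f(A+iB)\}\ge\sum_{m=1}^{2k}\frac{(-1)^{m-1}}{(2m-1)!}D^{2m-1}(-f)(A;B)$, and negating gives $\Im f(A+iB)\le\sum_{m=1}^{2k}\frac{(-1)^{m-1}}{(2m-1)!}D^{2m-1}f(A;B)$. These two are the second displayed chain. I would close by observing that parts (1)--(2) of Theorem \ref{T-3.1} impose no positivity on $B$ while parts (3)--(4) require $B\ge0$, exactly as in the two chains, and that $\sigma(A)\subset(0,\infty)$ for bounded self-adjoint $A$ is the same as $A>0$.

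I expect no genuine obstacle; the work is entirely organisational. The only points that require care are matching the residue of the tonicity order modulo $4$ with the correct case of Theorem \ref{T-3.1}, choosing whether to apply that case to $f$ or to $-f$, and keeping track of the sign changes $D^m(-f)=-D^mf$ together with the accompanying reversal of inequalities. Once these are lined up, the four bounds fall out, and one may remark that the statement is the specialization to operator $2$-tone functions on $(0,\infty)$ of the general recipe recorded in the paragraph preceding Corollary \ref{C-4.4} (the real-part chain directly, the imaginary-part chain via $-f$ being operator $3$-tone).
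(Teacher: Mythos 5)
Your proposal is correct and follows essentially the same route as the paper, which derives Corollary \ref{C-4.5} from Theorem \ref{T-3.1} via the paragraph preceding Corollary \ref{C-4.4}: $f$ operator convex is operator $2$-tone, hence operator $(4k-2)$- and $4k$-tone by \cite[Proposition 3.9]{FHR} (giving the real-part chain from parts (1)--(2)), while $-f$ is operator $3$-tone by \cite[Proposition 5.2]{FHR}, hence operator $(4k-1)$- and $(4k+1)$-tone (giving the imaginary-part chain from parts (4) and (3) after negation). The index bookkeeping and the hypotheses on $B$ are all handled exactly as the paper intends.
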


In the rest of this section let us strengthen (ii) of Theorem \ref{T-4.2} in the case
$(a,b)=(0,\infty)$. Assume that $f$ is a non-negative operator monotone function on $(0,\infty)$, and
let $f(z)$ be the analytic continuation (a Pick function) to $\bC\setminus(-\infty,0]$. It is rather
well-known (and easily verified by using the integral expression \eqref{F-4.2} of $f$) that if
$z\in\bC\setminus\{0\}$ and $0<\arg z<p\pi$ where $0<p\le1$, then $f(z)\in[0,\infty)$ or
$0<\arg f(z)<p\pi$, that is, the argument of $f(z)$ does not exceed that of $z$ for every $z\in\bC$
with $\Im z>0$. The theorem below is the operator-valued version of this result. For each $p\in(0,1]$
define
\begin{align*}
V_{p\pi}&:=\bigl\{X\in B(\cH):\Im X>0,\,\Im(e^{-ip\pi}X)<0\bigr\}, \\
V_{-p\pi}&:=\bigl\{X\in B(\cH):\Im X<0,\,\Im(e^{ip\pi}X)>0\bigr\},
\end{align*}
where $A<0$ means that $-A>0$. Obviously, $V_{p\pi}$ and $V_{-p\pi}$ are the operator
counterparts of $\{z\in\bC\setminus\{0\}:0<\arg z<p\pi\}$ and
$\{z\in\bC\setminus\{0\}:0>\arg z>-p\pi\}$, respectively.

\begin{thm}\label{T-4.6}
Let $0<p\le1$.
\begin{itemize}
\item[\rm(1)] Assume that $f$ is a non-negative and operator monotone function on $(0,\infty)$. If
$X\in V_{p\pi}$ (resp., $X\in V_{-p\pi}$), then $f(X)\in V_{p\pi}$ (resp., $f(X)\in V_{-p\pi}$), or
else $f(X)=\alpha I$ with $\alpha\ge0$. Moreover, $f(X)\in V_{p\pi}$ (resp., $f(X)\in V_{-p\pi}$) for
every $X\in V_{p\pi}$ (resp., $X\in V_{-p\pi}$) unless $f$ is a constant function.

\item[\rm(2)] Assume that $f$ is a non-negative and operator monotone decreasing function on
$(0,\infty)$. If $X\in V_{p\pi}$ (resp., $X\in V_{-p\pi}$), then $f(X)\in V_{-p\pi}$ (resp.,
$f(X)\in V_{p\pi}$), or else $f(X)=\alpha I$ with $\alpha\ge0$. Moreover, $f(X)\in V_{-p\pi}$ (resp.,
$f(X)\in V_{p\pi}$) for every $X\in V_{p\pi}$ (resp., $X\in V_{-p\pi}$) unless $f$ is a constant
function.
\end{itemize}
\end{thm}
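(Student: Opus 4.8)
The plan is to push the scalar integral representations of Theorem~\ref{T-4.3} through the analytic functional calculus, after recording a few elementary stability properties of the sets $V_{\pm p\pi}$. Write $\overline{V_{p\pi}}:=\{X\in B(\cH):\Im X\ge0,\ \Im(e^{-ip\pi}X)\le0\}$, and similarly $\overline{V_{-p\pi}}$; these are the norm closures of $V_{\pm p\pi}$. Since $\Im(\cdot)$ and $\Im(e^{\pm ip\pi}\cdot)$ are real-linear and $\sin p\pi\ge0$ for $0<p\le1$, one checks directly that $V_{p\pi}$ and $V_{-p\pi}$ are convex cones, that $V_{p\pi}+\overline{V_{p\pi}}\subseteq V_{p\pi}$ and $\overline{V_{p\pi}}+\overline{V_{p\pi}}\subseteq\overline{V_{p\pi}}$ (and likewise for $-p\pi$), that $\alpha I\in\overline{V_{p\pi}}\cap\overline{V_{-p\pi}}$ for every $\alpha\ge0$, that $X+\lambda I\in V_{p\pi}$ whenever $X\in V_{p\pi}$ and $\lambda\ge0$, and that $Y\in V_{-p\pi}\iff Y^*\in V_{p\pi}$. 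Moreover every $X\in V_{p\pi}$ is invertible, since $\Im X>0$ forces $\sigma(X)\subset\bC^+$ by Lemma~\ref{L-2.1}; and from $\Im(Z^{-1})=-(Z^*)^{-1}(\Im Z)Z^{-1}$ together with $e^{ip\pi}X^{-1}=(e^{-ip\pi}X)^{-1}$ one gets $\Im(X^{-1})<0$ and $\Im(e^{ip\pi}X^{-1})>0$, i.e.\ $X\in V_{p\pi}\iff X^{-1}\in V_{-p\pi}$. (In particular $\sigma(X)\subset\bC\setminus\bR$ for $X\in V_{p\pi}\cup V_{-p\pi}$, so $f(X)$ is defined.)

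The heart of the matter is the behaviour of the resolvent-type building blocks $\phi_\lambda(X):=X(X+\lambda I)^{-1}$ ($\lambda>0$) and $(X+\lambda I)^{-1}$ ($\lambda\ge0$) on $V_{p\pi}$. Writing $\phi_\lambda(X)=(I+\lambda X^{-1})^{-1}$ and using $X^{-1}\in V_{-p\pi}$, we get $I+\lambda X^{-1}\in V_{-p\pi}$ (add $I\in\overline{V_{-p\pi}}$ to $\lambda X^{-1}\in V_{-p\pi}$), hence $\phi_\lambda(X)\in V_{p\pi}$; similarly $X+\lambda I\in V_{p\pi}$ gives $(X+\lambda I)^{-1}\in V_{-p\pi}$. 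Furthermore $\Im\phi_\lambda(X)=\lambda\bigl((X+\lambda I)^*\bigr)^{-1}(\Im X)(X+\lambda I)^{-1}$ is strictly positive (invertible), and $\lambda\mapsto\phi_\lambda(X)$ is norm-continuous; thus both $\lambda\mapsto\Im\phi_\lambda(X)$ and $\lambda\mapsto-\Im(e^{-ip\pi}\phi_\lambda(X))$ are norm-continuous and invertible-positive on $(0,\infty)$, and analogously for $(X+\lambda I)^{-1}$ and $V_{-p\pi}$.

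To prove (1) we may take $(a,b)=(0,\infty)$. By Theorem~\ref{T-4.3}(1), $f(x)=\alpha+\beta x+\int_{(0,\infty)}x(x+\lambda)^{-1}\,d\mu(\lambda)$ with $\alpha,\beta\ge0$ and $\int_{(0,\infty)}(1+\lambda)^{-1}\,d\mu(\lambda)<\infty$. Since $\|\phi_\lambda(X)\|=O((1+\lambda)^{-1})$ (using $\phi_\lambda(X)=(I+\lambda X^{-1})^{-1}$ for large $\lambda$ and the crude bound $\|X\|\,\|(\Im X)^{-1}\|$ otherwise), the Bochner integral $\int\phi_\lambda(X)\,d\mu(\lambda)$ converges in operator norm, and the contour-integral/Fubini computation already used in the proofs of Theorems~\ref{T-3.1} and~\ref{T-4.2} identifies it with $f(X)-\alpha I-\beta X$. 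Hence $f(X)=\alpha I+\beta X+\int_{(0,\infty)}\phi_\lambda(X)\,d\mu(\lambda)$; as $\Im$ and $\Im(e^{-ip\pi}\cdot)$ commute with integration against the positive measure $\mu$, the integral lies in $\overline{V_{p\pi}}$, so $f(X)\in\overline{V_{p\pi}}$. If $f$ is non-constant, then $\beta>0$ or $\mu\ne0$: when $\beta>0$ we have $\beta X\in V_{p\pi}$ and the remaining terms lie in $\overline{V_{p\pi}}$, so $f(X)\in V_{p\pi}$; when $\mu\ne0$, picking $\lambda_0\in\mathrm{supp}\,\mu$ and using norm-continuity and invertibility of $\Im\phi_\lambda(X)$ near $\lambda_0$ gives $\Im\!\int\phi_\lambda(X)\,d\mu>0$, and similarly $\Im\bigl(e^{-ip\pi}\!\int\phi_\lambda(X)\,d\mu\bigr)<0$, so $\int\phi_\lambda(X)\,d\mu\in V_{p\pi}$ and again $f(X)\in V_{p\pi}$. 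If $f$ is constant, then $\beta=\mu=0$ and $f(X)=\alpha I$ with $\alpha\ge0$. This proves the assertions of (1) for $V_{p\pi}$; those for $V_{-p\pi}$ follow by applying the result to $X^*\in V_{p\pi}$, since $f(X)^*=f(X^*)$ and $Y\in V_{-p\pi}\iff Y^*\in V_{p\pi}$.

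Part (2) has the same structure with $(X+\lambda I)^{-1}$ in place of $\phi_\lambda(X)$: by Theorem~\ref{T-4.3}(2)/(3), $f(x)=\alpha+\int_{[0,\infty)}(x+\lambda)^{-1}\,d\mu(\lambda)$ with $\alpha\ge0$, so $f(X)=\alpha I+\int_{[0,\infty)}(X+\lambda I)^{-1}\,d\mu(\lambda)\in\overline{V_{-p\pi}}$, and $f(X)\in V_{-p\pi}$ unless $\mu=0$, i.e.\ unless $f$ is the constant $\alpha$; the statement for $X\in V_{-p\pi}$ again follows by passing to $X^*$. I expect the only genuinely technical steps to be the norm-convergence of $\int\phi_\lambda(X)\,d\mu(\lambda)$ --- matching the decay $\|\phi_\lambda(X)\|=O((1+\lambda)^{-1})$ against $\int(1+\lambda)^{-1}\,d\mu<\infty$ --- and the identification of this integral with $f(X)$ via the Fubini argument of Section~3; the remaining work is just the bookkeeping of the cones $V_{\pm p\pi}$ and their closures.
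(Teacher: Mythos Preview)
Your argument is correct and takes a genuinely different route from the paper. The paper establishes the crucial inequality $\Im\{e^{-ip\pi}X(X+\lambda I)^{-1}\}<0$ by a direct computation: writing $X=A+iB$ with $B>0$, setting $C_\lambda:=B^{-1/2}(A+\lambda I)B^{-1/2}$, expanding $(X+\lambda I)^{-1}$ in terms of $C_\lambda$, and then pushing through a chain of operator inequalities that eventually reduces to $-\{\lambda I-B^{1/2}(C_\lambda^2+I)^{1/2}B^{1/2}\}^2\le0$. Part~(2) is then obtained by the substitution $x\mapsto x^{-1}$. Your proof replaces all of this by the single structural observation that inversion swaps $V_{p\pi}$ and $V_{-p\pi}$ (from $\Im(Z^{-1})=-(Z^*)^{-1}(\Im Z)Z^{-1}$ applied to $Z=X$ and $Z=e^{-ip\pi}X$), so that $\phi_\lambda(X)=(I+\lambda X^{-1})^{-1}\in V_{p\pi}$ and $(X+\lambda I)^{-1}\in V_{-p\pi}$ follow immediately from the cone arithmetic $V_{\pm p\pi}+\overline{V_{\pm p\pi}}\subset V_{\pm p\pi}$. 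This is cleaner and more conceptual, handles the strict inequality for free, and treats parts~(1) and~(2) symmetrically without a separate reduction; the paper's computation, on the other hand, is entirely self-contained and makes the role of the hypothesis $\Im(e^{-ip\pi}X)<0$ visible at the level of matrix entries. The remaining technical points you flag (norm convergence of $\int\phi_\lambda(X)\,d\mu$ from $\|\phi_\lambda(X)\|=O((1+\lambda)^{-1})$ and the Fubini identification with $f(X)$) are indeed routine along the lines of the proofs of Theorems~\ref{T-3.1} and~\ref{T-4.2}; just note that in part~(2) the case $f\equiv0$ (excluded in Theorem~\ref{T-4.3}(3)) is trivially $f(X)=0\cdot I$.
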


\begin{proof}
(1)\enspace
By assumption $f$ admits the integral expression \eqref{F-4.2} as in (ii) of Theorem \ref{F-4.3}\,(1).
Assume that $X\in V_{p\pi}$, so $\Im X>0$ and $\Im(e^{-ip\pi}X)<0$. Theorem \ref{T-4.2} says that
$\Im f(X)\ge0$ and that if $\Im f(X)>0$ is not satisfied, then $f$ is a constant function. As before
we have
$$
f(X)=\alpha I+\beta X+\int_{(0,\infty)}X(X+\lambda I)^{-1}\,d\mu(\lambda).
$$
Let us prove that
\begin{equation}\label{F-4.3}
\Im\bigl\{e^{-ip\pi}X(X+\lambda I)^{-1}\bigr\}<0,\qquad\lambda\in(0,\infty).
\end{equation}
Write $X=A+iB$ with $A,B\in B(\cH)^{sa}$. Letting $C_\lambda:=B^{-1/2}(A+\lambda I)B^{-1/2}$ we
compute
\begin{align*}
X(X+\lambda I)^{-1}&=I-\lambda(X+\lambda I)^{-1} \\
&=I-\lambda\bigl\{B^{1/2}(C_\lambda+iI)B^{1/2}\bigr\}^{-1} \\
&=I-\lambda B^{-1/2}\bigl(C_\lambda^2+I\bigr)^{-1}(C_\lambda-iI)B^{-1/2} \\
&=I-\lambda B^{-1/2}\bigl(C_\lambda^2+I\bigr)^{-1}C_\lambda B^{-1/2}
+i\lambda B^{-1/2}\bigl(C_\lambda^2+I\bigr)^{-1}B^{-1/2}
\end{align*}
so that
\begin{align}
&\Im\bigl\{e^{-ip\pi}X(X+\lambda I)^{-1}\bigr\} \nonumber\\
&\quad=\lambda\cos p\pi\cdot B^{-1/2}\bigl(C_\lambda^2+I\bigr)^{-1}B^{-1/2}
-\sin p\pi\cdot\Bigl\{I-\lambda B^{-1/2}\bigl(C_\lambda^2+I\bigr)^{-1}C_\lambda B^{-1/2}\Bigr\}
\nonumber\\
&\quad=B^{-1/2}\bigl(C_\lambda^2+I\bigr)^{-1/2} \nonumber\\
&\qquad\quad\times\Bigl\{\lambda\cos p\pi\cdot I+\lambda\sin p\pi\cdot C_\lambda
-\sin p\pi\cdot\bigl(C_\lambda^2+I\bigr)^{1/2}B\bigl(C_\lambda^2+I\bigr)^{1/2}\Bigr\} \nonumber\\
&\qquad\quad\times\bigl(C_\lambda^2+I\bigr)^{-1/2}B^{-1/2}. \label{F-4.4}
\end{align}
The assumption $\Im(e^{-ip\pi}X)<0$ means that $\cos p\pi\cdot B-\sin p\pi\cdot A<0$ so that
$$
\cos p\pi\cdot I<\sin p\pi\cdot B^{-1/2}AB^{-1/2}
=\sin p\pi\cdot\bigl(C_\lambda-\lambda B^{-1}\bigr).
$$
Thanks to $\lambda>0$ and $\sin p\pi\ge0$ we hence have
\begin{align}
&\lambda\cos p\pi\cdot I+\lambda\sin p\pi\cdot C_\lambda
-\sin p\pi\cdot\bigl(C_\lambda^2+I\bigr)^{1/2}B\bigl(C_\lambda^2+I\bigr)^{1/2} \nonumber\\
&\quad<\lambda\sin p\pi\cdot\bigl(C_\lambda-\lambda B^{-1}\bigr)
+\lambda\sin p\pi\cdot C_\lambda
-\sin p\pi\cdot\bigl(C_\lambda^2+I\bigr)^{1/2}B\bigl(C_\lambda^2+I\bigr)^{1/2} \nonumber\\
&\quad=\sin p\pi\cdot B^{-1/2}
\Bigl\{-\lambda^2I+2\lambda B^{1/2}C_\lambda B^{1/2}
-\bigl(B^{1/2}\bigl(C_\lambda^2+I\bigr)^{1/2}B^{1/2}\bigr)^2\Bigr\}B^{-1/2}. \label{F-4.5}
\end{align}
Furthermore, since $C_\lambda\le\bigl(C_\lambda^2+I\bigr)^{1/2}$, we have
\begin{align}
&-\lambda^2I+2\lambda B^{1/2}C_\lambda B^{1/2}
-\bigl(B^{1/2}\bigl(C_\lambda^2+I\bigr)^{1/2}B^{1/2}\bigr)^2 \nonumber\\
&\qquad\le-\lambda^2 I+2\lambda B^{1/2}\bigl(C_\lambda^2+I\bigr)^{1/2}B^{1/2}
-\bigl(B^{1/2}\bigl(C_\lambda^2+I\bigr)^{1/2}B^{1/2}\bigr)^2 \nonumber\\
&\qquad=-\Bigl\{\lambda I-B^{1/2}\bigl(C_\lambda^2+I\bigr)^{1/2}B^{1/2}\Bigr\}^2\le0. \label{F-4.6}
\end{align}
Combining \eqref{F-4.4}--\eqref{F-4.6} altogether we arrive at \eqref{F-4.3}.

Since $\Im(e^{-ip\pi}\alpha)\le0$ and $\Im(e^{-ip\pi}\beta X)\le0$ ($<0$ if $\beta>0$), we see by
\eqref{F-4.3} that $\Im\{e^{-p\pi}f(X)\}\le0$. Moreover, if $\Im\{e^{-ip\pi}f(X)\}<0$ is not
satisfied, then $\beta=0$ and $\mu=0$; hence $f$ is a constant $\alpha$ so that $f(X)=\alpha I$.
This implies that $f(X)\in V_{p\pi}$, or else $f$ is a constant. The assertion of (1) with $V_{-p\pi}$
in place of $V_{p\pi}$ is immediately seen since we have $f(X)=f(X^*)^*$ by reflection principle.

(2)\enspace
Note that $f$ is operator monotone on $(0,\infty)$ if and only if $f(x^{-1})$ is operator monotone
decreasing on $(0,\infty)$. Also, if $X\in V_{p\pi}$ (resp., $X\in V_{-p\pi}$), then $X$ is invertible
and $X^{-1}\in V_{-p\pi}$ (resp., $X^{-1}\in V_{p\pi}$). Hence (2) is immediately seen from (1).
\end{proof}

\begin{remark}\label{R-4.8}\rm
Assume that $f$ is a non-negative and non-decreasing operator convex function on $(0,\infty)$. One
has the integral expression
$$
f(x)=\alpha+\beta t+\gamma t^2+\int_{(0,\infty)}{x^2\over x+\lambda}\,d\mu(\lambda),
\qquad x\in(0,\infty),
$$
where $\alpha,\beta,\gamma\ge0$ and $\mu$ is a positive measure on $(0,\infty)$ such that
$\int_{(0,\infty)}(1+\lambda)^{-1}\,d\mu(\lambda)\allowbreak<+\infty$ (see \cite[\S\S2.7--2.8]{Hi}).
From this expression it is easily verified that if $z\in\bC\setminus\{0\}$ and $0<\arg z<p\pi$ where
$0<p\le1/2$, then $f(z)\in[0,\infty)$ or $0<\arg f(z)<2p\pi$, that is, the argument of $f(z)$ does
not exceed 2 times that of $z$ for every $z$ in the first quadrant of $\bC$. The operator-valued
version of this like Theorem \ref{T-4.6} holds under the assumption that $X$ is normal, but it is
not true in general. Indeed, if this holds for $f(x):=x^2$, then we must have
$\Im\{(A+iB)^2\}=AB+BA\ge0$ for $A,B\in B(\cH)^{sa}$ with $A,B>0$. However, this is not valid in
general.
\end{remark}


\begin{thebibliography}{99}

\bibitem{An1}
T. Ando, {\it Topics on Operator Inequalities}, Lecture notes (mimeographed),
Hokkaido Univ., Sapporo, 1978.

\bibitem{AH}
T. Ando and F. Hiai,
Operator log-convex functions and operator means,
{\it Math. Ann.} {\bf 350} (2011), 611--630.

\bibitem{Bh1}
R. Bhatia, {\it Matrix Analysis}, Springer, New York, 1996.

\bibitem{Do}
W. F. Donoghue, Jr.,
{\it Monotone Matrix Functions and Analytic Continuation},
Springer, Berlin-Heidelberg-New York, 1974.

\bibitem{Ep}
H. Epstein, Remarks on two theorems of E.\ Lieb,
{\it Comm. Math. Phys.} {\bf 31} (1973), 317--325.

\bibitem{FHR}
U. Franz, F. Hiai and \'E. Ricard,
Higher order extension of L\"owner's theory: Operator $k$-tone functions, Preprint, 2011.

\bibitem{Ha}
F. Hansen,
Trace functions as Laplace transforms, {\it J. Math. Phys.} {\bf 47} (2006), 043504, 1--11.

\bibitem{HP}
F. Hansen and G. K. Pedersen,
Jensen's inequality for operators and L\"owner's theorem,
{\it Math. Ann.} {\bf 258} (1982), 229--241.

\bibitem{Hi}
F. Hiai,
Matrix Analysis: Matrix Monotone Functions, Matrix Means, and Majorization,
{\it Interdisciplinary Information Sciences} {\bf 16} (2010), 139--248.

\bibitem{Kr}
F. Kraus,
\"Uber konvexe Matrixfunktionen, {\it Math. Z.} {\bf 41} (1936), 18--42.

\bibitem{Lw}
K. L\"owner, \"Uber monotone Matrix Funktionen,
{\em Math. Z.} {\bf 38} (1934), 177--216.

\end{thebibliography}
\end{document}